\newtheorem{lemma}{Lemma}
\newtheorem{theorem}{Theorem}%[section]
\newtheorem{definition}{Definition}%[section]
\newtheorem{corollary}{Corollary}%[section]
\newtheorem{remark}{Remark}%[section]
\newtheorem{problem}{Problem}%[section]
\title{Inequalities involving a Ramanujan Integral}
\author{Deepshikha Mishra$^\dagger$}
\address{$^\dagger$Department of Mathematics\\ Indian Institute of Technology, Roorkee-247667, Uttarakhand, India}
\email{deepshikha\_m@ma.iitr.ac.in, deepshikhamishraalld@gmail.com}
\author[A. Swaminathan]{A. Swaminathan $^{\#\ddagger}$}{\thanks{$^{\#}$Corresponding author}}
\address{$^\ddagger$Department of Mathematics\\ Indian Institute of Technology, Roorkee-247667, Uttarakhand, India}
\email{a.swaminathan@ma.iitr.ac.in, mathswami@gmail.com}
\begin{document}
	\keywords{Ramanujan integrals; Turan-type inequalities; complete monotonicity; Bernstein function; strongly completely monotone;}
	
	\subjclass[2020] {33E20, 26A48, 26D07, }
	
\begin{abstract}
In this manuscript, various properties of the Ramanujan integral $I_R(x)$, defined as
\begin{align*}
		I_R(x) = \int_0^\infty e^{-xt} \dfrac{dt}{t(\pi^2 + \log^2 t)}, \quad x>0,
\end{align*}
are investigated, including its monotonicity, subadditivity, as well as convexity. Furthermore, it is shown that the Ramanujan integral admits an antiderivative that belongs to the class of Bernstein functions. Subsequently, we examine a Turan-type function involving the Ramanujan integral given by
\begin{align*}
		H_n(x;\alpha) = \left(I_R^{(n)}(x)\right)^2 - \alpha I_R^{(n-1)}(x) I_R^{(n+1)}(x), \quad x>0,
\end{align*}
	and establish its complete monotonicity under certain conditions on $\alpha$. Graphical evidences are given for the results where few ranges are yet to be established, providing scope for future research.
\end{abstract}
\maketitle
\markboth{Deepshikha Mishra and A. Swaminathan}{Inequalities involving a Ramanujan Integral}

%\section*{}
Among numerous integrals studied by Ramanujan \cite{hardy_rama int_2015}, we consider the definite integral
\begin{align}\label{ramanujan int eq}
	I_R(x)=\int_0^\infty e^{-xt} \dfrac{dt}{t(\pi^2+\log^2 t)}, \quad x>0.
\end{align}
Other expressions for $I_R(x)$ are also available and given in \cite{ryzhik_table of integrals book, wood_1966_ramanujan int_moc}. In this work, we focus on the representation \eqref{ramanujan int eq}. This integral has various applications \cite{Giles_1995_rama int application,
%Smith_1999_rama int application,
Smith_2000_rama int asymp} like the one application appears in the solution of the two-dimensional diffusion equation, which relies on the following asymptotic expansion of the integral.

%An alternative representation of $I_R(x)$ was established in \cite{wood_1966_ramanujan int_moc}, given by
%\begin{align*}
%	I_R(x) = e^x \int_{0}^{1} \dfrac{\Gamma(t, x)}{\Gamma(t)}  dt, \quad x > 0,
%\end{align*}
%where $\Gamma(t)$ denotes the Euler gamma function, and $\Gamma(t, x)$ is the (lower) incomplete gamma function defined by
%\begin{align*}
%	\Gamma(t, x) = \int_0^x e^{-tu} u^{t-1}  du.
%\end{align*}

%\begin{align*}
%	I_R(x) = e^x - \int_{0}^{\infty} \dfrac{x^t}{\Gamma(t+1)}  dt, \quad x > 0.
%\end{align*}
%Differentiating $nth$ times leads to
%\begin{align*}
%(-1)^n	I^{(n)}_R(x) = e^x - \int_{0}^{\infty} \dfrac{x^t (\log t)^n}{\Gamma(t+1)}  dt, \quad x > 0.
%\end{align*}

\begin{theorem}\cite{Bouwkamp_1971_rama int asymp}
Let $n \geq 0$ and $a > 0$. Then, as $x \to \infty$, the following holds
\begin{align}\label{rama asym}
		\int_{0}^{\infty} e^{-xt} \dfrac{t^{n-1}}{a^2 + \log^2 t}  dt \sim x^{-n} \sum_{k=0}^{\infty} \Phi_k(a, n) \log^{-k-1} x,
\end{align}
	where the coefficients $\Phi_k(a, n)$ are determined by the generating function
\begin{align}\label{rama asym coeff}
		\sum_{k=0}^{\infty} \Phi_k(a, n) \dfrac{x^k}{k!} = \dfrac{\sin(ax)}{a} \Gamma(n + x).
\end{align}
\end{theorem}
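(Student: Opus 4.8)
The plan is to convert \eqref{rama asym} into a single oscillatory gamma integral, control its behaviour as the large parameter tends to infinity by Watson's lemma, and then read off the coefficients from a complex Taylor expansion that reproduces \eqref{rama asym coeff}. Denote by $J(x)$ the left-hand side of \eqref{rama asym} and put $L=\log x$. I would carry out the argument for $n>0$ in full and reduce the boundary case $n=0$ to $n=1$ at the end.

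First I would insert the Laplace-transform identity
\[
\frac{1}{a^{2}+\log^{2}t}=\frac{1}{a}\int_{0}^{\infty}e^{-as}\cos(s\log t)\,ds
\]
into $J(x)$ and interchange the order of integration; for $n>0$ this is legitimate by absolute convergence, since $\int_{0}^{\infty}\!\int_{0}^{\infty}e^{-xt}t^{n-1}e^{-as}\,ds\,dt=a^{-1}\Gamma(n)x^{-n}<\infty$. Writing $\cos(s\log t)=\tfrac12\bigl(t^{is}+t^{-is}\bigr)$ and evaluating the inner integral by the gamma integral $\int_{0}^{\infty}e^{-xt}t^{n-1\pm is}\,dt=\Gamma(n\pm is)\,x^{-(n\pm is)}$ (valid as $\operatorname{Re}(n\pm is)=n>0$) collapses everything to
\[
J(x)=\frac{x^{-n}}{a}\,\operatorname{Re}\int_{0}^{\infty}e^{-iLs}\,h(s)\,ds,\qquad h(s):=e^{-as}\,\Gamma(n+is),
\]
so the task reduces to the asymptotics of this Fourier integral as $L\to\infty$.

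By Stirling's formula $|\Gamma(n+is)|$ behaves like $\sqrt{2\pi}\,s^{\,n-1/2}e^{-\pi s/2}$ and its derivatives bring in only logarithmically growing $\psi$-factors, so $h$ is smooth on $[0,\infty)$ with every derivative in $L^{1}$ and exponentially small at infinity; $N$-fold integration by parts then gives
\[
\int_{0}^{\infty}e^{-iLs}h(s)\,ds=\sum_{m=0}^{N-1}\frac{h^{(m)}(0)}{(iL)^{m+1}}+O\!\bigl(L^{-N-1}\bigr),
\]
which is the Fourier-integral form of Watson's lemma (keeping the true decaying factor $e^{-as}$ inside $h$ is what keeps the large parameter real and sidesteps any sector-of-validity subtlety). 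To evaluate $h^{(m)}(0)$ I substitute $w=is$: since $h(s)=g(is)$ with $g(w):=e^{iaw}\Gamma(n+w)$ analytic near $0$, one gets $h^{(m)}(0)=i^{m}g^{(m)}(0)$, hence $\operatorname{Re}\bigl(h^{(m)}(0)/i^{m+1}\bigr)=\operatorname{Im}g^{(m)}(0)=G^{(m)}(0)$, where $G(w):=\sin(aw)\,\Gamma(n+w)$ is the imaginary part of $g$ on the real axis ($\Gamma(n+w)$ being real there since $n>0$). As the generating function \eqref{rama asym coeff} says precisely $\Phi_{k}(a,n)=a^{-1}G^{(k)}(0)$, this yields $J(x)\sim x^{-n}\sum_{k\ge0}\Phi_{k}(a,n)L^{-k-1}$, as claimed.

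For $n=0$ the Fubini step breaks down, since $\Gamma(is)$ has a pole at $s=0$; instead I would observe that $\tfrac{d}{dx}\int_{0}^{\infty}e^{-xt}\,t^{-1}(a^{2}+\log^{2}t)^{-1}\,dt=-\int_{0}^{\infty}e^{-xt}(a^{2}+\log^{2}t)^{-1}\,dt$ is $J(x)$ with $n=1$, integrate its expansion from $x$ to $\infty$ (permissible because $\Phi_{0}(a,1)=0$ removes the otherwise non-integrable $(y\log y)^{-1}$ term), and use $\Gamma(1+x)=x\Gamma(x)$ to identify the shifted and rescaled coefficients as exactly $\Phi_{k}(a,0)$. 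I expect the substance to lie in bookkeeping rather than in one hard estimate: verifying that all derivatives of $h$ decay exponentially so the remainder is genuinely $O(L^{-N-1})$ and the series is a true asymptotic expansion (with several vanishing leading coefficients when $n>0$, starting from $\Phi_{0}(a,n)=0$), and checking that the complex Taylor computation reproduces \eqref{rama asym coeff} term by term; the alternative of applying a complex-parameter Watson's lemma directly to $\int_{0}^{\infty}e^{-(a+iL)s}\Gamma(n+is)\,ds$ would instead require a sector-uniform remainder together with a doubly-indexed resummation in powers of $L^{-1}$, which is why the arrangement above is preferable.
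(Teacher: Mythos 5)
This theorem is quoted in the paper directly from Bouwkamp's 1971 note and no proof is given there, so there is no in-paper argument to compare yours against; your derivation therefore has to stand on its own, and it does. The chain is sound: the identity $\frac{1}{a^{2}+\log^{2}t}=\frac{1}{a}\int_{0}^{\infty}e^{-as}\cos(s\log t)\,ds$, Fubini (absolutely convergent for $n>0$), the gamma integral $\int_{0}^{\infty}e^{-xt}t^{n-1\pm is}\,dt=\Gamma(n\pm is)x^{-(n\pm is)}$, and repeated integration by parts against $e^{-iLs}$ (legitimate since $\Gamma(n+is)$ and all its $s$-derivatives decay like $e^{-\pi s/2}$ times powers and logarithms) produce exactly $x^{-n}\sum_{k}a^{-1}G^{(k)}(0)L^{-k-1}$ with $G(w)=\sin(aw)\Gamma(n+w)$, which matches \eqref{rama asym coeff}; the reduction of $n=0$ to $n=1$ via $J_{0}(x)=\int_{x}^{\infty}J_{1}(y)\,dy$ is consistent with the coefficient relation $\Phi_{k+1}(a,1)=(k+1)\Phi_{k}(a,0)$ forced by $\Gamma(1+w)=w\Gamma(w)$, and the vanishing of $\Phi_{0}(a,1)$ does rescue integrability at infinity. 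Two small points of bookkeeping: the $O(L^{-N-1})$ remainder after the partial sum to $m=N-1$ really comes from $N+1$ integrations by parts (the $m=N$ boundary term is itself $O(L^{-N-1})$ and gets absorbed into the error), and for $n>0$ only $\Phi_{0}(a,n)$ vanishes, not ``several'' leading coefficients, since $\Phi_{1}(a,n)=\Gamma(n)\neq 0$. Neither affects the validity of the argument.
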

\begin{remark}
	In particular, choosing $a = \pi$ provides the asymptotic expansion of the Ramanujan integral and its derivatives. From \eqref{rama asym coeff}, we know the values of $\Phi_0(\pi,n)$, $\Phi_1(\pi,n)$ and $\Phi_2(\pi,n)$, which are given as
\begin{align}\label{value of phi}
		\Phi_0(\pi,n) = 0, \quad \Phi_1(\pi,n) = \Gamma(n), \quad \Phi_2(\pi,n) = \Gamma(n) \psi(n),
\end{align}
where $\psi(n)$ denotes the digamma function, which is the logarithmic derivative of the gamma function.
\end{remark}

%In parallel with the study of asymptotic expansions of integrals, the asymptotic analysis of series, notably hypergeometric series, has emerged as a significant area of research since the seminal contributions of Ramanujan \cite{bruce berndt_rama infinite series}. Subsequent developments have focused on refining these expansions, with detailed investigations presented in \cite{ponnu_rama_hypergeomtric_1998, ponnu_rama_hypergeomtric_1997, srinivas_rama_hypergeometric_jcam2005} and the references therein.

Findings of Srinivasa Ramanujan are being analyzed in the literature with various objectives. For example, in \cite{cohl_rama int_2025}, a specific Ramanujan integral involving entire functions is considered in terms of bilateral hypergeometric functions and are evaluated using beta integrals for specific values. In \cite{mehrez_ramanujan type entire fun}, another class of Ramanujan type entire functions were considered for studying various aspects, including the admissibility of Turan type inequalities. Study of Ramanujan integrals is another aspect of research that exist in the literatures. For example, the derivatives of the Ramanujan integral \eqref{ramanujan int eq}, and the results concerning their complete monotonicity are established in \cite{gautschi_2024_ramanujan int_moc}. Motivated by this, we further investigate the subclass of completely monotonic functions and derive some new results on the Ramanujan integral. We first present the fundamental definitions and theorems that will be used in the proofs of the main results.

\begin{definition}\cite{Widder_1941_The Laplace Transform}\label{cmf definition}
A function $f$ is said to be completely monotonic on $(a,b)$, if it is infinitely differentiable i.e. $f \in C^\infty$ and satisfies the condition
\begin{align*}
		(-1)^k f^{(k)}(x)\geq 0,
\end{align*}
for every non-negative integer $k$.
\end{definition}

In addition to its definition, completely monotonic functions are also characterized by the Bernstein theorem, which is stated below.
\begin{theorem}\cite{Schilling_2012_bernstein function}\label{Bernstein theorem}
	Let $f:(0,\infty) \mapsto\mathbb{R}$ be a completely monotonic function. Then  it can be expressed as the Laplace transform of a unique measure $\mu$ defined on the interval $[0,\infty)$. In other words, for every $x > 0,$
	\begin{align*}
		f(x)=\mathscr{L}(\mu; x)= \int_{[0,\infty)} e^{-xt} d\mu(t).
	\end{align*}	
	Conversely, if $\mathscr{L}{(\mu; x)}< \infty $ for every $x>0$, then the function $x \mapsto \mathscr{L}{(\mu; x)}$ is completely monotone.
\end{theorem}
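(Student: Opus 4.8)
The statement is the classical Hausdorff--Bernstein--Widder representation theorem, and the plan is to prove the two implications separately, the converse being routine and the direct implication carrying all the weight.

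\emph{The converse direction.} Suppose $\mathscr{L}(\mu;x)<\infty$ for every $x>0$. Fix $x_0>0$; on $(x_0,\infty)$ one may differentiate $\mathscr{L}(\mu;x)$ under the integral sign $k$ times, the justification being that $t^k e^{-xt}\le C_k\, e^{-(x_0/2)t}$ for $x\ge x_0$ with $C_k$ independent of $x$, and the dominating function is $\mu$-integrable since $\mathscr{L}(\mu;x_0/2)<\infty$. Hence $x\mapsto \mathscr{L}(\mu;x)$ lies in $C^\infty(0,\infty)$ and $(-1)^k\frac{d^k}{dx^k}\mathscr{L}(\mu;x)=\int_{[0,\infty)}t^k e^{-xt}\,d\mu(t)\ge 0$, so it is completely monotone in the sense of Definition~\ref{cmf definition}.

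\emph{The direct direction.} Let $f$ be completely monotone on $(0,\infty)$. I would first record two standard facts extracted from the sign conditions: $f$ is real-analytic on $(0,\infty)$, and its Taylor series based at any $x_0>0$ converges to $f$ on $(0,2x_0)$; moreover $f(0^+):=\lim_{x\downarrow 0}f(x)$ exists in $[0,\infty]$ by monotonicity. Assume first $f(0^+)<\infty$. For each $k\in\mathbb{N}$ introduce the purely atomic positive measure
\[
\mu_k=\sum_{j\ge 0}\frac{(-1)^j k^j}{j!}\,f^{(j)}(k)\,\delta_{j/k},
\]
each weight being nonnegative precisely because $f$ is completely monotone; testing the partial sums against the constant $1$ and using that the Taylor remainders of $f$ at $k$ are nonnegative shows $\mu_k\big([0,\infty)\big)\le f(0^+)$, so the total masses are bounded uniformly in $k$. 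The crucial computation is that, with $s:=e^{-x/k}\in(0,1)$,
\[
\mathscr{L}(\mu_k;x)=\sum_{j\ge 0}\frac{(-ks)^j}{j!}\,f^{(j)}(k)=f\big(k(1-s)\big)=f\big(k(1-e^{-x/k})\big),
\]
the middle equality being exactly the convergence of the Taylor series of $f$ at $k$ evaluated at the point $k(1-s)\in(0,k)\subset(0,2k)$. Since $k(1-e^{-x/k})\to x$ and $f$ is continuous, $\mathscr{L}(\mu_k;x)\to f(x)$ for every $x>0$. By Helly's selection theorem a subsequence $\mu_{k_i}$ converges vaguely to a positive measure $\mu$ with $\mu([0,\infty))\le f(0^+)$, and since $t\mapsto e^{-xt}$ is continuous, bounded and vanishes at infinity, $\mathscr{L}(\mu_{k_i};x)\to\mathscr{L}(\mu;x)$; comparing the two limits gives $f(x)=\mathscr{L}(\mu;x)$. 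For general $f$, apply this to $x\mapsto f(x+\varepsilon)$, which has finite value at $0^+$, obtaining measures $\mu^{(\varepsilon)}$ with $f(x+\varepsilon)=\mathscr{L}(\mu^{(\varepsilon)};x)$; one checks $d\mu^{(\varepsilon)}(t)=e^{-(\varepsilon-\varepsilon_0)t}\,d\mu^{(\varepsilon_0)}(t)$ for $\varepsilon>\varepsilon_0$, so the $\mu^{(\varepsilon)}$ increase as $\varepsilon\downarrow 0$ to a measure $\mu$, and monotone convergence yields $f(x)=\mathscr{L}(\mu;x)$. Uniqueness is injectivity of the Laplace transform: if $\mathscr{L}(\mu;x)=\mathscr{L}(\nu;x)$ for all $x>0$, fix $x_0>0$, put $u=e^{-x_0 t}$, and note that the functions $u^n=e^{-nx_0 t}$ generate, via Stone--Weierstrass, a dense subalgebra of $C_0([0,\infty))$, forcing $\mu=\nu$; alternatively one invokes Widder's inversion formula, which reconstructs $\mu([0,t])$ from the derivatives $f^{(j)}$.

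\emph{Main obstacle.} The single non-formal point is the identity $\mathscr{L}(\mu_k;x)=f\big(k(1-e^{-x/k})\big)$: it rests on the convergence of the Taylor series of a completely monotone function on a full half-neighbourhood of its base point, which must be established by estimating the Lagrange (or integral) remainder using only the alternating sign pattern of the derivatives. This is where complete monotonicity is used in an essential way and it cannot be bypassed; everything else (differentiation under the integral, Helly's theorem, Stone--Weierstrass) is soft analysis.
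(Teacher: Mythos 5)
The paper does not prove this statement at all: it is quoted verbatim from the reference \cite{Schilling_2012_bernstein function} (the Bernstein--Hausdorff--Widder theorem) and used as a black box, so there is no internal proof to compare yours against. Judged on its own terms, your argument is the classical Feller-style proof and its architecture is sound: the converse direction by differentiation under the integral with the dominating function $t^k e^{-xt}\le C_k e^{-(x_0/2)t}$ is correct; the direct direction via the atomic measures $\mu_k=\sum_{j}\frac{(-1)^jk^j}{j!}f^{(j)}(k)\,\delta_{j/k}$, the identity $\mathscr{L}(\mu_k;x)=f\bigl(k(1-e^{-x/k})\bigr)$, the uniform mass bound by $f(0^+)$, Helly selection, and the $\varepsilon$-shift to handle $f(0^+)=\infty$ is a complete and standard route; and the uniqueness via Stone--Weierstrass applied to the algebra generated by $e^{-nx_0t}$ (in its locally compact form, after reducing to finite measures by multiplying by $e^{-\varepsilon t}$ if necessary) is fine.

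The one genuine gap is the one you yourself flag: the convergence of the Taylor series of a completely monotone $f$ at base point $k$ on the interval $(0,k]$, which is the entire content of the middle equality $\sum_j\frac{(-ks)^j}{j!}f^{(j)}(k)=f(k(1-s))$. This is not automatic from real-analyticity claims and must be proved from the sign pattern alone; the standard device is to show that the integral-form remainder
\begin{align*}
R_n(x)=\frac{(-1)^n}{(n-1)!}\int_x^{k}(t-x)^{n-1}f^{(n)}(t)\,dt
\end{align*}
is nonnegative and satisfies $R_n(x)\le\bigl(\tfrac{k-x}{k-y}\bigr)^{n-1}R_n(y)\le\bigl(\tfrac{k-x}{k-y}\bigr)^{n-1}f(y)$ for $0<y<x<k$, whence $R_n(x)\to0$. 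Until that estimate is written out, the proof is a (correctly organized) outline rather than a proof; everything else you assert is routine. A minor point worth making explicit: passing from vague convergence $\mu_{k_i}\to\mu$ to $\mathscr{L}(\mu_{k_i};x)\to\mathscr{L}(\mu;x)$ uses both that $e^{-xt}\in C_0([0,\infty))$ and the uniform bound $\sup_i\mu_{k_i}([0,\infty))\le f(0^+)$ to control the tail, which you have available but should invoke.
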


It is well known that completely monotone functions are closely related to completely monotone sequences. In particular, if $f(x)$ is completely monotone on $(a,\infty)$ for some $a>0$, then for any positive real $r$, the sequence $\{f(a+rn)\}$ is completely monotone \cite{Widder_1941_The Laplace Transform}. Moreover, such sequences, often referred to as Hausdorff moment sequences, play crucial role in the theory of moment problems. Specifically, the Hausdorff moment problem admits a solution if, and only if, the underlying sequence is a Hausdorff moment sequence \cite{akhiezer_book_moment problem, swami_baricz_jca_2014}.

It may also be noted that results related to the complete monotonicity of various other special functions exist in the literature. For example, \cite{Alzer_1998_Inequality polygamma_siam} explores completely monotonic behaviour of polygama functions. Based on the results given in \cite{2002_AnnAcadFenn_AlzerBerg_CM-seq}, further exploration was made in \cite{Alzer_on cm fun_rama jour_2006}, for analyzing the complete monotonicity behaviour of specific classes of functions that can be defined in terms of gamma, digamma and polygamma functions. For various other results in this direction, we refer to \cite{Koumandos_survey on cm, pedresan_2009_scm_jmaa}.

In \cite{gautschi_2024_ramanujan int_moc}, a result establishing the complete monotonicity of the Ramanujan integral is presented, which is stated as follows.
\begin{theorem}\label{ramanujan int cm thm}\cite{gautschi_2024_ramanujan int_moc}
The Ramanujan integral $I_R(x)$, as defined in \eqref{ramanujan int eq}, is completely monotonic on the interval $(0, \infty)$, and satisfies
\begin{align*}
		(-1)^m I_R^{(m)}(x) > 0 \quad \text{for all} \  m = 0, 1, 2, \ldots,
\end{align*}
where $I_R^{(m)}(x)$ denotes the $m$-th derivative of $I_R(x)$.
\end{theorem}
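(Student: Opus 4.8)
\emph{Proof proposal.} The plan is to read the complete monotonicity straight off the representation \eqref{ramanujan int eq}, which already displays $I_R$ as a Laplace transform. Put $d\mu(t)=\dfrac{dt}{t(\pi^2+\log^2 t)}$ on $(0,\infty)$. Since the density $\frac{1}{t(\pi^2+\log^2 t)}$ is strictly positive for every $t>0$, $\mu$ is a positive Borel measure and $I_R(x)=\mathscr{L}(\mu;x)$. By the converse part of Theorem~\ref{Bernstein theorem}, it then suffices to verify that $\mathscr{L}(\mu;x)<\infty$ for every $x>0$, and complete monotonicity on $(0,\infty)$ follows immediately.

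For the finiteness I would split $\int_0^\infty=\int_0^{1/2}+\int_{1/2}^{2}+\int_2^\infty$. On $[1/2,2]$ the integrand is continuous, because $\pi^2+\log^2 t\ge\pi^2>0$ there, so the middle piece is finite. On $[2,\infty)$ the factor $e^{-xt}$ decays exponentially while $\frac{1}{t(\pi^2+\log^2 t)}$ stays bounded, so that piece is finite for every $x>0$. For the piece near the origin, the substitution $u=\log t$ converts $\int_0^{1/2}\frac{dt}{t(\pi^2+\log^2 t)}$ into $\int_{-\infty}^{\log(1/2)}\frac{du}{\pi^2+u^2}$, which converges; combined with $e^{-xt}\le 1$ this controls the first piece. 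Hence $\mathscr{L}(\mu;x)<\infty$ for all $x>0$, so $I_R$ is completely monotonic.

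To get the strict inequalities, I would differentiate under the integral sign. A routine dominated‑convergence argument justifies this on every half‑line $x\ge x_0>0$: the relevant integrands are bounded by $t^{m}e^{-x_0 t}/\pi^2$ for $t\ge1$ (integrable) and by $\frac{1}{t(\pi^2+\log^2 t)}$ for $t<1$ (integrable, as above). This yields
\begin{align*}
	(-1)^m I_R^{(m)}(x)=\int_0^\infty t^{\,m-1}e^{-xt}\,\frac{dt}{\pi^2+\log^2 t}, \qquad m=0,1,2,\dots,
\end{align*}
where for $m=0$ the right‑hand side is just $\int_0^\infty \frac{e^{-xt}}{t(\pi^2+\log^2 t)}\,dt$. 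The integrand is strictly positive throughout $(0,\infty)$, so the integral is strictly positive, giving $(-1)^m I_R^{(m)}(x)>0$.

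I do not expect a real obstacle. The only point needing genuine care is the borderline convergence of $\int_0\frac{dt}{t(\pi^2+\log^2 t)}$ at the left endpoint, which is precisely what makes the Ramanujan integral meaningful and is handled cleanly by the $u=\log t$ substitution; everything else is the standard Bernstein‑theorem mechanism together with a dominating function for differentiation under the integral.
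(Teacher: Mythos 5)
Your proposal is correct and is essentially the argument the paper itself indicates: the theorem is quoted from Gautschi--Milovanovi\'c without proof, but the remark immediately following it observes that complete monotonicity follows from Theorem~\ref{Bernstein theorem} because \eqref{ramanujan int eq} exhibits $I_R$ as the Laplace transform of a non-negative measure, which is exactly your route. Your added details (the $u=\log t$ substitution for convergence near $t=0$ and the domination argument justifying differentiation under the integral, which also yields the strict positivity of $(-1)^m I_R^{(m)}(x)$) correctly fill in what the remark leaves implicit.
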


\begin{remark}
	Another approach to establish the complete monotonicity of $I_R(x)$, given by Theorem \ref{ramanujan int cm thm}, is by using Theorem \ref{Bernstein theorem},  because the representation given in \eqref{ramanujan int eq} makes it evident that $I_R(x)$ is the Laplace transform of a non-negative measure.
\end{remark}

\begin{theorem}\label{ram int derivative cm thm}
For any non-negative integer $n$, the function $(-1)^n I_R^{(n)}(x)$ satisfies the following.
\begin{enumerate}[label=(\roman*)]
		\rm \item For all $x, y > 0$ and $\alpha \in (0,1)$, we have
\begin{align}\label{log conv rama int eq}
			(-1)^n I_R^{(n)}\left(\alpha x + (1 - \alpha) y\right) \leq \left((-1)^n I_R^{(n)}(x)\right)^\alpha \left((-1)^n I_R^{(n)}(y)\right)^{1 - \alpha}.
\end{align}
		
\item For $\alpha = \dfrac{1}{2}$ and $y=x+2$, we have
\begin{align}\label{turan rama int eq}
			\left(I_R^{(n)}(x+1)\right)^2 \leq I_R^{(n)}(x)  I_R^{(n)}(x+2),\quad x>0.
\end{align}
\end{enumerate}
\end{theorem}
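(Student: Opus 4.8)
The plan is to recognize $(-1)^n I_R^{(n)}$ as the Laplace transform of a non-negative measure and then obtain both parts from Hölder's inequality. First I would differentiate \eqref{ramanujan int eq} $n$ times under the integral sign, which is legitimate since $I_R$ is completely monotone on $(0,\infty)$ by Theorem~\ref{ramanujan int cm thm}; this gives
\begin{align*}
(-1)^n I_R^{(n)}(x) = \int_0^\infty e^{-xt}\, d\mu_n(t), \qquad d\mu_n(t) := \frac{t^{n-1}}{\pi^2 + \log^2 t}\, dt,
\end{align*}
the integral on the right being precisely the one in \eqref{rama asym} with $a=\pi$. Thus $(-1)^n I_R^{(n)}$ is the Laplace transform of the non-negative (Radon) measure $\mu_n$ on $(0,\infty)$, and in particular it is finite and strictly positive for every $x>0$, which is what makes the geometric means in \eqref{log conv rama int eq} and the squaring step in \eqref{turan rama int eq} legitimate.

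For part (i), I would fix $x,y>0$ and $\alpha\in(0,1)$, split $e^{-(\alpha x+(1-\alpha)y)t}=\left(e^{-xt}\right)^{\alpha}\left(e^{-yt}\right)^{1-\alpha}$, and apply Hölder's inequality with conjugate exponents $1/\alpha$ and $1/(1-\alpha)$ with respect to $\mu_n$:
\begin{align*}
(-1)^n I_R^{(n)}\left(\alpha x+(1-\alpha)y\right)
&=\int_0^\infty \left(e^{-xt}\right)^{\alpha}\left(e^{-yt}\right)^{1-\alpha}\,d\mu_n(t)\\
&\le \left(\int_0^\infty e^{-xt}\,d\mu_n(t)\right)^{\alpha}\left(\int_0^\infty e^{-yt}\,d\mu_n(t)\right)^{1-\alpha}\\
&=\left((-1)^n I_R^{(n)}(x)\right)^{\alpha}\left((-1)^n I_R^{(n)}(y)\right)^{1-\alpha},
\end{align*}
which is \eqref{log conv rama int eq}; equivalently, $(-1)^n I_R^{(n)}$ is log-convex on $(0,\infty)$. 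Part (ii) is then simply the specialization $\alpha=\tfrac12$, $y=x+2$, so that $\alpha x+(1-\alpha)y=x+1$: \eqref{log conv rama int eq} gives $(-1)^n I_R^{(n)}(x+1)\le\left((-1)^n I_R^{(n)}(x)\right)^{1/2}\left((-1)^n I_R^{(n)}(x+2)\right)^{1/2}$, and squaring both (positive) sides while using $(-1)^n(-1)^n=1$ yields $\left(I_R^{(n)}(x+1)\right)^2\le I_R^{(n)}(x)\,I_R^{(n)}(x+2)$.

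I do not expect a genuine obstacle here; the only two points requiring a line of justification are the interchange of differentiation and integration (guaranteed by Theorem~\ref{ramanujan int cm thm}) and the finiteness and positivity of the moments $\int_0^\infty e^{-xt}\,d\mu_n(t)$. If a more self-contained route is preferred, one can avoid writing down $\mu_n$ altogether: since $I_R$ is completely monotone, so is $(-1)^n I_R^{(n)}$ for every $n$, and every completely monotone function is log-convex --- a standard consequence of Theorem~\ref{Bernstein theorem} together with the Cauchy--Schwarz (or Hölder) inequality --- which gives (i) directly, and hence (ii).
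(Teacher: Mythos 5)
Your argument is correct, and in its closing paragraph you in fact describe verbatim the route the paper takes: the authors simply note that $(-1)^n I_R^{(n)}$ inherits complete monotonicity from $I_R$ (Theorem \ref{ramanujan int cm thm}) and then invoke the standard fact that completely monotone functions are logarithmically convex, from which \eqref{log conv rama int eq} is immediate and \eqref{turan rama int eq} follows by the substitution $\alpha=\tfrac12$, $y=x+2$. Your primary route is the more self-contained one: you make the representing measure $d\mu_n(t)=t^{n-1}(\pi^2+\log^2 t)^{-1}\,dt$ explicit and derive log-convexity directly from H\"older's inequality applied to the splitting $e^{-(\alpha x+(1-\alpha)y)t}=(e^{-xt})^{\alpha}(e^{-yt})^{1-\alpha}$, which buys you a proof that does not lean on a cited implication and also makes the strict positivity and finiteness of the quantities being raised to powers transparent. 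The trade-off is that the paper's one-line citation generalizes verbatim to any completely monotone function without ever writing down a density, whereas your computation is tied to (but also illuminates) the specific integral representation \eqref{rama int nth deri}. Both proofs are complete; there is no gap in yours.
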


\begin{proof}
	By Theorem \ref{ramanujan int cm thm}, $I_R(x)$ is completely monotone. Consequently, $(-1)^n I_R^{(n)}(x)$ is also completely monotone on $(0,\infty)$ for every nonnegative integer $n$. Since complete monotonicity implies logarithmic convexity \cite{Widder_1941_The Laplace Transform}, it follows that
\begin{align*}
		\log \left((-1)^n I_R^{(n)} \left(\alpha x + (1 - \alpha) y\right)\right) \leq \alpha \log \left( (-1)^n I_R^{(n)}(x)\right) + (1 - \alpha) \log \left( (-1)^n I_R^{(n)}(y)\right),
\end{align*}
for all $x, y > 0$ and $\alpha \in (0,1)$.
Simplifying the above inequality yields \eqref{log conv rama int eq}, thereby establishing the first part.
	
A straightforward substitution of $\alpha = \dfrac{1}{2}$ and $y = x + 2$ into \eqref{log conv rama int eq} yields \eqref{turan rama int eq}.
\end{proof}

\begin{remark}
	For $n = 0$, the inequalities \eqref{log conv rama int eq} and \eqref{turan rama int eq} reduce to the inequalities satisfied by the Ramanujan integral $I_R(x)$.
\end{remark}

The class of completely monotonic functions include several important subclasses, such as the class of Stieltjes functions, strongly completely monotonic functions, among others. In this work, we focus on a specific subclass of completely monotonic functions, known as the strongly completely monotone functions, which is defined as follows.

\begin{definition}\cite{pedresan_2009_scm_jmaa}
A function $f : (0, \infty) \to \mathbb{R}$ is said to be strongly completely monotonic function, if it is infinitely differentiable and for every integer $n \geq 0$,
$(-1)^n x^{n+1} f^{(n)}(x)$ is non-negative and strictly decreasing on the interval $(0, \infty)$.
\end{definition}

In \cite{trimble_1989_scm_siam}, an alternative characterization of strongly completely monotonic functions is provided, which is given below as a definition.

\begin{definition}\label{scm def}\cite{trimble_1989_scm_siam}
	A function $f$ is strongly completely monotonic if, and only if,
	\begin{align*}
		f(x) = \int_0^\infty e^{-x t} m(t) dt, \quad x\in(0,\infty),
	\end{align*}
	where $m(t)$ is a non-negative and non-decreasing function.
\end{definition}

Strongly completely monotonic functions exhibit several important properties, including subadditivity (superadditivity) and star-shapedness. In particular, the superadditive property plays an important role in statistical inference, as shown in \cite{trimble_1989_scm_siam}. More detailed studies on strongly completely monotonic functions can be found in  \cite{pedresan_2009_scm_jmaa}.

In the sequel, we examine whether the Ramanujan integral, known to be completely monotonic, also belongs to the subclass of strongly completely monotonic functions.

% At first, we derive inequalities involving the Ramanujan integral and its derivatives. We then show that the Ramanujan integral is not only completely monotonic but also admits an antiderivative that belongs to the class of Bernstein functions. We conclude this section by showing that the Turan-type function involving derivatives of the Ramanujan integral is completely monotonic under certain parametric conditions.

\begin{theorem}\label{rama int scm thm}
	For integers $n \geq 0$, consider the function
\begin{align}\label{rama int nth deri}
		(-1)^n I_R^{(n)}(x) = \int_{0}^{\infty} e^{-xt} \dfrac{t^{n-1}}{\pi^2 + \log^2 t} dt.
\end{align}
	Then, we have the following.
\begin{enumerate}[label=(\roman*)]
		\rm
\item For $n \geq 2$, $(-1)^n I_R^{(n)}(x)$ is strongly completely monotone on the interval $(0, \infty)$. However, $(-1)^n I_R^{(n)}(x)$ is not a Stieltjes function.
		
\item For $n=1$, $- I_R^{'}(x)$ is neither strongly completely monotone nor a Stieltjes function on $(0,\infty)$.
		
\item For $n=0$, $I_R(x)$ is not strongly completely monotone on $(0,\infty)$. However, precise conclusion for Stieltjes function behaviour of $I_R(x)$ is not available.
\end{enumerate}
	
\end{theorem}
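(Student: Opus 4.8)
The plan is to exploit the Laplace-integral representation \eqref{rama int nth deri} together with Definition \ref{scm def}: writing $(-1)^n I_R^{(n)}(x)=\int_0^\infty e^{-xt}\,m_n(t)\,dt$ with $m_n(t)=t^{n-1}/(\pi^2+\log^2 t)$, strong complete monotonicity is equivalent to showing that $m_n$ is non-negative and non-decreasing on $(0,\infty)$. Non-negativity is obvious, so everything reduces to the sign of $m_n'(t)$. First I would compute
\begin{align*}
	m_n'(t)=\frac{t^{n-2}\bigl((n-1)(\pi^2+\log^2 t)-2\log t\bigr)}{(\pi^2+\log^2 t)^2},
\end{align*}
so the decisive quantity is $g_n(t):=(n-1)(\pi^2+\log^2 t)-2\log t$. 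For $n\ge 2$, substituting $u=\log t\in\mathbb{R}$ turns this into $(n-1)(\pi^2+u^2)-2u$, a quadratic in $u$ with positive leading coefficient and discriminant $4-4(n-1)^2\pi^2<0$ for all $n\ge 2$; hence $g_n(t)>0$ for every $t>0$, $m_n$ is strictly increasing, and part (i)'s monotonicity claim follows from Definition \ref{scm def}. For $n=1$, $g_1(t)=-2\log t$ changes sign at $t=1$ (positive for $t<1$, negative for $t>1$), so $m_1$ is not monotone and $-I_R'(x)$ fails to be strongly completely monotone; for $n=0$, $m_0(t)=1/\bigl(t(\pi^2+\log^2 t)\bigr)\to\infty$ as $t\to 0^+$ and $\to 0$ as $t\to\infty$, so $m_0$ is (eventually) decreasing and $I_R(x)$ is not strongly completely monotone either. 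This settles the strong-complete-monotonicity assertions in (i), (ii), (iii).

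For the Stieltjes part, recall that a Stieltjes function admits the representation $f(x)=a+\int_0^\infty \frac{d\sigma(t)}{x+t}$, equivalently $f(x)=\int_0^\infty e^{-xt}\,\ell(t)\,dt$ where the density $\ell$ is itself completely monotone (being a Laplace transform of a positive measure). So to disprove the Stieltjes property for $(-1)^n I_R^{(n)}$, $n\ge 1$, it suffices to show the density $m_n(t)=t^{n-1}/(\pi^2+\log^2 t)$ is \emph{not} completely monotone on $(0,\infty)$; and we already have $m_n'(t)>0$ somewhere (indeed everywhere, for $n\ge 2$; on $(0,1)$ for $n=1$), which violates the $k=1$ condition $(-1)^1 m_n'(t)\le 0$ in Definition \ref{cmf definition}. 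That immediately gives the "not Stieltjes" conclusions for $n\ge 1$. For $n=0$, the density $m_0$ \emph{is} non-increasing, so this easy obstruction disappears; one would need to test a higher-order derivative $(-1)^k m_0^{(k)}$ for a sign change, and the expressions become unwieldy — this is exactly why the theorem only claims that a precise conclusion for $I_R(x)$ is "not available," and I would simply state that the naive obstruction fails and leave it (as the authors do).

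The main obstacle is not any single computation but making sure the \emph{equivalences} are stated with the right hypotheses: that Definition \ref{scm def}'s $m(t)$ is genuinely the same density appearing in \eqref{rama int nth deri} (uniqueness of the representing measure from Theorem \ref{Bernstein theorem} guarantees this), and that "Stieltjes $\Rightarrow$ Laplace density is completely monotone" is invoked correctly — a Stieltjes function is the Laplace transform of a completely monotone function, so failure of complete monotonicity of the density is a legitimate certificate of non-Stieltjes-ness. Once those two reductions are in place, the rest is the elementary quadratic-in-$\log t$ discriminant check above, which cleanly separates the case $n\ge 2$ (always increasing) from $n=1$ (sign change at $t=1$) and $n=0$ (decreasing density, obstruction inconclusive).
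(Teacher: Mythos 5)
Your proposal is correct and follows essentially the same route as the paper: both reduce strong complete monotonicity to the non-decreasing nature of the density $t^{n-1}/(\pi^2+\log^2 t)$ via Definition \ref{scm def}, both settle the sign of its derivative through the quadratic $(n-1)(\pi^2+u^2)-2u$ in $u=\log t$ (your discriminant check is equivalent to the paper's completing-the-square), and both rule out the Stieltjes property for $n\ge 1$ by noting the density fails to be completely monotone. The only cosmetic differences are that the paper computes $\phi_0'(t)<0$ explicitly rather than arguing from the limits of $\phi_0$ at $0^+$ and $\infty$, and your explicit appeal to uniqueness of the representing measure is a small improvement in rigor.
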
		
\begin{proof}
Let us assume
\begin{align*}
		\phi_n(t) = \dfrac{t^{n-1}}{\pi^2 + \log^2 t}, \quad t>0.
\end{align*}

Differentiating $\phi_n(t)$ with respect to $t$, we obtain
	
\begin{align}\label{value of phi deri}
		\phi_n'(t) = \dfrac{t^{n-2}}{\pi^2 + \log^2 t}  \underbrace{\left((n - 1)(\pi^2 + \log^2 t) - 2 \log t\right)}_{I_1}, \quad t>0.
\end{align}
	
The expression $I_1$ from \eqref{value of phi deri} can be rewritten as
\begin{align*}
		I_1 = \underbrace{(n - 1)\pi^2 - \dfrac{1}{n - 1}}_{I_2} + \left((n - 1)^{1/2} \log t - \dfrac{1}{(n - 1)^{1/2}}\right)^2, \quad t>0.
\end{align*}
	Since $I_2 > 0$ for all $n \geq 2$, it follows that $I_1 \geq 0$. This gives $\phi_n'(t) \geq 0$ for all $n \geq 2$, showing that $\phi_n(t)$ is non-decreasing on $(0,\infty)$. Moreover, $\phi_n(t)$ is nonnegative for all $n \geq 2$. Hence, by Theorem \ref{scm def}, $(-1)^n I_R^{(n)}(x)$ is strongly completely monotone for all integers $n \geq 2$. Further, the function $(-1)^n I_R^{(n)}(x)$ cannot be a Stieltjes function for any $n \geq 2$ because the density function $\phi_n(t)$ in its integral representation \eqref{rama int nth deri} is increasing for these values of $n$. However, for a function to be Stieltjes function, its density $\phi_n(t)$ must be decreasing \cite{Koumandos_survey on cm} which proves the theorem for $n\geq2$.
	
For $n = 1$, we have
\begin{align}\label{density of rama int deri}
I_R'(x) = - \int_{0}^{\infty} e^{-xt} \dfrac{1}{\pi^2 + \log^2 t} \, dt, \quad x > 0,
	%\end{align*}
\quad
{\mbox{
with density
}}
\quad
%\begin{align}
\phi_1(t) = \dfrac{1}{\pi^2 + \log^2 t}, \quad t>0.
\end{align}

For $-I_R'(x)$ to be strongly completely monotone, the function $\phi_1(t)$ must be increasing \cite{Koumandos_survey on cm}. On the other hand, for $-I_R'(x)$ to be a Stieltjes function, $\phi_1(t)$ must be completely monotone \cite{Koumandos_survey on cm}.
	
Differentiating density function $\phi_1(t)$ given in \eqref{density of rama int deri} with respect to $t$ gives
\begin{align*}
		\phi_1'(t) = -\dfrac{2 \log t}{t(\pi^2 + \log^2 t)}, \qquad t > 0.
\end{align*}

This derivative is positive for $t \in (1,\infty)$ and negative for $t \in (0,1)$. Consequently, $-I_R'(x)$ is neither Stieljes nor strongly completely monotone and this proves the second part. The behavior of $\phi_1'(t)$ can also be observed in the following graph.
\begin{figure}[H]
%		\footnotesize
%		\stackunder[5pt]{
        \includegraphics[scale=0.7]{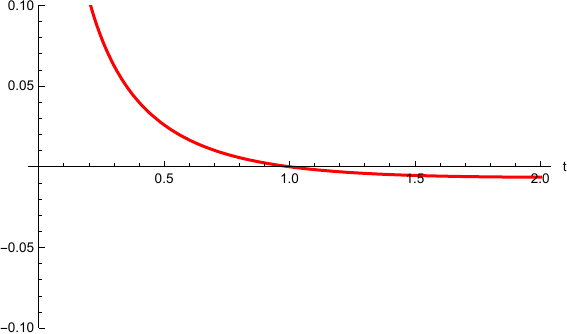}
%       }{}
		\caption{Graph of the derivative of $\phi_1(t)$}
		\label{fig: graph of derivative of phi_1(t)}
\end{figure}
	
For $n = 0$, the function $(-1)^n I_R^{(n)}(x)$ reduces to the Ramanujan integral $I_R(x)$ given in \eqref{ramanujan int eq}, with density
\begin{align*}
		\phi_0(t) = \dfrac{1}{t(\pi^2 + \log^2 t)}, \quad t > 0.
\end{align*}
For $I_R(x)$ to be a strongly completely monotone function, $\phi_0(t)$ must be increasing \cite{Koumandos_survey on cm}, whereas for $I_R(x)$ to be a Stieltjes function, $\phi_0(t)$ must be completely monotone \cite{Koumandos_survey on cm}.

Differentiating $\phi_0(t)$ with respect to $t$, we obtain
\begin{align*}
		\phi_0'(t) = -\dfrac{(\pi^2 - 1) + (\log t + 1)^2}{t^2 (\pi^2 + \log^2 t)^2}, \quad t>0,
\end{align*}
which is negative for all $t > 0$. This implies that $I_R(x)$ is not a strongly completely monotone function. However, whether it is a Stieltjes function remains to be determined, and we analyze this by observing the graphs of $\phi_0(t)$ and its derivatives presented below.

\begin{figure}[H]
%		\footnotesize
%		\stackunder[5pt]{
        \includegraphics[scale=0.7]{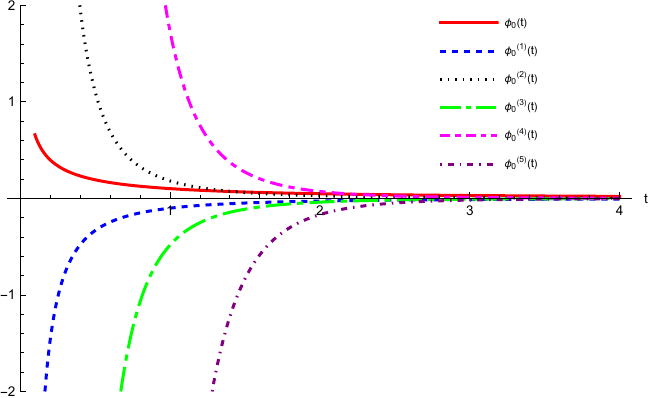}
%       }{}
		\caption{Graph of $\phi_0(t)$ upto its fifth derivatives}
		\label{fig: density of rama int}
\end{figure}
	
Figure \ref{fig: density of rama int} shows that $\phi_0(t)$ is positive and its first five derivatives alternate in sign. This suggests that $\phi_0(t)$ may be completely monotone, but no conclusion can be drawn without verifying this alternating sign property for all derivatives.
\end{proof}

A mathematical justification for the third part of Theorem \ref{rama int scm thm} is still needed. However it would be interesting to see whether $I_R^{(n)}(x)$ is a Stieltjes function. Once it is shown to be a Stieltjes function, we would have many properties to discuss further, such as whether it belongs to the class of Pick functions and related topics. Now we provide a theorem from \cite{trimble_1989_scm_siam} highlighting the implications of strong complete monotonicity.

%\begin{definition}\label{starshaped def}\cite{trimble_1989_scm_siam}
%	A continuous, non-negative function $f$ defined on $[0, \infty)$ with $f(0) = 0$ is called star-shaped if it satisfies the condition
%	\begin{align}\label{star shaped eq}
%		f(\beta x) = \beta f(x), \quad \forall \  \beta \in (0,1).
%	\end{align}
%\end{definition}
%
%
%\begin{definition}\label{sup add def}\cite{trimble_1989_scm_siam}
%	A function $f : [0, \infty) \to \mathbb{R}$ is said to be superadditive if it is continuous, non-negative with $f(0) = 0$, satisfies
%	\begin{align}\label{superaddi eq}
%		f(x + y) \geq f(x) + f(y), \quad  \forall  \ x, y \in (0, \infty).
%	\end{align}
%\end{definition}

\begin{lemma}\label{scm concequence}\cite{trimble_1989_scm_siam}
Suppose that $f$ is a strongly completely monotonic function on $(0,\infty)$ such that the product $xf(x)$ is not constant. Define a function $g$ by
\begin{align*}
		g(x) =
		\begin{cases}
			0, & \text{if } \  x = 0, \\
			\dfrac{1}{f(x)}, & \text{if } \  0 < x < \infty.
		\end{cases}
\end{align*}
Then the function $g$ is star-shaped, and consequently, it is superadditive.
\end{lemma}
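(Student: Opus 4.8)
The plan is to prove the two conclusions separately: first that $g$ is star-shaped with respect to the origin, meaning $g(\lambda x)\le\lambda g(x)$ for every $x\ge 0$ and $\lambda\in[0,1]$, and then that a star-shaped function vanishing at $0$ is automatically superadditive. Since $g(0)=0$, the star-shaped property is equivalent to the monotonicity statement that $x\mapsto g(x)/x$ is non-decreasing on $(0,\infty)$, and because $g(x)/x=1/\bigl(x f(x)\bigr)$ for $x>0$ — here $f$, being completely monotonic and, by the hypothesis that $xf(x)$ is not constant, not identically zero, is strictly positive on $(0,\infty)$, so the reciprocal is legitimate — it suffices to show that $h(x):=x f(x)$ is non-increasing on $(0,\infty)$.

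To see that $h$ is non-increasing I would use the integral characterization of strong complete monotonicity from Definition \ref{scm def}: write $f(x)=\int_0^\infty e^{-xt}m(t)\,dt$ with $m$ non-negative and non-decreasing, and apply the substitution $u=xt$ to obtain
\[
 h(x)=x f(x)=\int_0^\infty e^{-u}\,m\!\left(\tfrac{u}{x}\right)du .
\]
For each fixed $u>0$ the argument $u/x$ decreases as $x$ increases, so $m(u/x)$ is non-increasing in $x$; integrating against $e^{-u}\,du$ preserves this, and hence $h$ is non-increasing. (Equivalently, one may simply invoke the defining inequality of a strongly completely monotonic function at $n=0$, where $x^{0+1}f^{(0)}(x)=xf(x)$ is declared non-negative and decreasing.) Consequently $g(x)/x=1/h(x)$ is non-decreasing on $(0,\infty)$, so for $0<\lambda\le 1$ and $x>0$ we get $g(\lambda x)=\lambda x\cdot\frac{g(\lambda x)}{\lambda x}\le\lambda x\cdot\frac{g(x)}{x}=\lambda g(x)$, while the cases $\lambda=0$ and $x=0$ hold trivially because $g(0)=0$. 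Thus $g$ is star-shaped.

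For the passage to superadditivity, take $x,y>0$, set $s=x+y$, and use star-shapedness with the two ratios $\lambda=x/s$ and $\lambda=y/s$, both in $(0,1)$:
\[
 g(x)=g\!\left(\tfrac{x}{s}\,s\right)\le\tfrac{x}{s}\,g(s),\qquad g(y)=g\!\left(\tfrac{y}{s}\,s\right)\le\tfrac{y}{s}\,g(s) .
\]
Adding these and using $\tfrac{x}{s}+\tfrac{y}{s}=1$ yields $g(x)+g(y)\le g(s)=g(x+y)$, and the cases in which $x$ or $y$ is zero follow at once from $g(0)=0$. This completes the proof.

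The only genuinely delicate point is the monotonicity of $h(x)=xf(x)$, and the cleanest way to secure it is through the Bernstein-type representation of Definition \ref{scm def} together with the change of variables above, since there the non-decreasing density makes the conclusion transparent; everything else is the routine implication star-shaped $\Rightarrow$ superadditive. The hypothesis that $xf(x)$ is not constant plays exactly one role: it rules out the degenerate case $m\equiv\mathrm{const}$, i.e. $f(x)=c/x$, for which $g$ is linear and the star-shaped inequality degenerates to an identity.
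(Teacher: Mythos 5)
The paper does not prove this lemma; it is quoted verbatim from \cite{trimble_1989_scm_siam}, so there is no in-paper argument to compare against. Your proof is correct and self-contained: the reduction of star-shapedness to the monotonicity of $x\mapsto g(x)/x$, the observation that $h(x)=xf(x)$ is non-increasing (which, as you note parenthetically, is immediate from the $n=0$ case of the definition of strong complete monotonicity, making the change-of-variables computation a nice but optional confirmation via Definition \ref{scm def}), and the standard passage from star-shapedness to superadditivity are all sound. The only point you leave implicit is the continuity of $g$ at $0$ required by the paper's definition of star-shaped; this follows because $xf(x)$ is positive and decreasing with a positive supremum as $x\to 0^{+}$, so $f(x)\to\infty$ and $g(x)\to 0=g(0)$, but it is worth a sentence. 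Your closing remark correctly identifies the role of the non-constancy hypothesis.
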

Note that a continuous, non-negative function $f$ defined on $[0, \infty)$ with $f(0) = 0$ satisfying the property
\begin{align*}
	f(\beta x) = \beta f(x), \quad \forall \ \beta \in (0,1).
\end{align*}
is known as star-shaped, while if $f$ satisfies
\begin{align*}
	f(x + y) \geq f(x) + f(y), \quad \forall \ x, y \in (0, \infty),
\end{align*}
then, it is known to be superadditive.

Next, we discuss some bounds and related inequalities for $I_R^{(n)}(x)$.

\begin{theorem}
	Let $I_R^{(n)}(x)$ denote the $n$th derivative of the Ramanujan integral defined in \eqref{rama int nth deri}. Then the following inequalities hold.
\begin{enumerate}[label=(\roman*)]
		\rm 	\item \label{supadd for even n}
		For $n = 2m$, where $m = 1, 2, 3, \dots$, we have
\begin{align*}
			I_R^{(n)}(x) + I_R^{(n)}(y) \leq \dfrac{I_R^{(n)}(x) I_R^{(n)}(y)}{I_R^{(n)}(x + y)}, \quad x, y > 0.
\end{align*}
		
\item \label{supadd for odd n}
		For $n = 2m + 1$, where $m = 1, 2, 3, \dots$, we have
\begin{align*}
			I_R^{(n)}(x) + I_R^{(n)}(y) \geq \dfrac{I_R^{(n)}(x) I_R^{(n)}(y)}{I_R^{(n)}(x + y)}, \quad x, y > 0.
\end{align*}
		
\item \label{starshaped even n}
		For $n = 2m$, where $m = 1, 2, 3, \dots$, we have
\begin{align*}
			\beta I_R^{(n)}(\beta x) \leq I_R^{(n)}(x), \quad x > 0, \quad \beta \in (0, 1).
\end{align*}
		
\item \label{starshaped odd n}
		For $n = 2m + 1$, where $m = 1, 2, 3, \dots$, we have
\begin{align*}
			\beta I_R^{(n)}(\beta x) \geq I_R^{(n)}(x), \quad x > 0, \quad \beta \in (0, 1).
\end{align*}
\end{enumerate}
\end{theorem}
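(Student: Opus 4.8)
The plan is to derive all four inequalities as consequences of Lemma~\ref{scm concequence}, applied to the appropriately signed derivative of the Ramanujan integral. Observe first that in every part one has $n\ge 2$, since $n=2m$ or $n=2m+1$ with $m\ge 1$. Put $f(x):=(-1)^nI_R^{(n)}(x)$, so that $f=I_R^{(n)}$ when $n$ is even and $f=-I_R^{(n)}$ when $n$ is odd. By the first part of Theorem~\ref{rama int scm thm}, $f$ is strongly completely monotone on $(0,\infty)$ for each such $n$, and by Theorem~\ref{ramanujan int cm thm} it is strictly positive there; in particular $I_R^{(n)}(x)I_R^{(n)}(y)=f(x)f(y)>0$ for all $x,y>0$.

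To apply Lemma~\ref{scm concequence} one must check that $xf(x)$ is not constant. If $xf(x)\equiv c$, then $f(x)=c/x=\int_0^\infty e^{-xt}c\,dt$, and comparing this with the representation~\eqref{rama int nth deri} and invoking uniqueness of the representing measure (Theorem~\ref{Bernstein theorem}) would force the density $\phi_n(t)=t^{n-1}/(\pi^2+\log^2 t)$ to be almost everywhere constant. But the computation~\eqref{value of phi deri} already shows that $\phi_n'(t)>0$ for all $t>0$ once $n\ge 2$, so $\phi_n$ is strictly increasing, hence non-constant; equivalently, the asymptotic expansion~\eqref{rama asym}--\eqref{value of phi} rules out $f(x)=c/x$. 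Lemma~\ref{scm concequence} therefore applies, and the function
\[
g(x)=\begin{cases}0,& x=0,\\ \dfrac{1}{(-1)^nI_R^{(n)}(x)},& x>0,\end{cases}
\]
is star-shaped and, consequently, superadditive on $[0,\infty)$.

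It remains to translate the two properties of $g$ back into statements about $I_R^{(n)}$. Written out, superadditivity $g(x+y)\ge g(x)+g(y)$ is the inequality $\frac{1}{(-1)^nI_R^{(n)}(x+y)}\ge \frac{1}{(-1)^nI_R^{(n)}(x)}+\frac{1}{(-1)^nI_R^{(n)}(y)}$; multiplying through by the positive quantity $I_R^{(n)}(x)I_R^{(n)}(y)$ leaves a common factor $(-1)^n$ on both sides, which is harmless for even $n$ and reverses the inequality for odd $n$, and a rearrangement then gives part~(i) for $n=2m$ and part~(ii) for $n=2m+1$. Likewise, star-shapedness $g(\beta x)\le \beta g(x)$ reads $\frac{1}{(-1)^nI_R^{(n)}(\beta x)}\le \frac{\beta}{(-1)^nI_R^{(n)}(x)}$, and clearing the positive denominators $(-1)^nI_R^{(n)}(\beta x)$ and $(-1)^nI_R^{(n)}(x)$, followed by the same parity-dependent sign analysis, yields parts~(iii) and~(iv). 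The manipulations are elementary; the only points that genuinely require attention are checking the hypotheses of Lemma~\ref{scm concequence} (the non-constancy of $xf(x)$, handled above through $\phi_n$) and keeping the direction of each inequality correct according to the parity of $n$. Beyond this bookkeeping there is no real obstacle --- the substantive input is the strong complete monotonicity of $(-1)^nI_R^{(n)}$ already provided by Theorem~\ref{rama int scm thm}.
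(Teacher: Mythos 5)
Your route is the one the paper itself takes: apply Lemma~\ref{scm concequence} to $f(x)=(-1)^nI_R^{(n)}(x)$, which Theorem~\ref{rama int scm thm} guarantees is strongly completely monotone for $n\ge 2$, and then unpack the star-shapedness and superadditivity of $g=1/f$. Your verification that $xf(x)$ is not constant (via the strict monotonicity of the density $\phi_n$) is a hypothesis the paper silently skips, and your treatment of parts (i) and (ii) is correct and matches the paper's.

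The gap is in the final step for parts (iii) and (iv): the ``same parity-dependent sign analysis'' does not yield the stated inequalities but their reversals. From $g(\beta x)\le \beta g(x)$, i.e.
\[
\frac{1}{f(\beta x)}\le\frac{\beta}{f(x)},
\]
clearing the two positive denominators gives $f(x)\le \beta f(\beta x)$, that is, $\beta(-1)^nI_R^{(n)}(\beta x)\ge (-1)^nI_R^{(n)}(x)$; for even $n$ this reads $\beta I_R^{(n)}(\beta x)\ge I_R^{(n)}(x)$, the opposite of part (iii), and for odd $n$ it reads $\beta I_R^{(n)}(\beta x)\le I_R^{(n)}(x)$, the opposite of part (iv). A direct check confirms that the reversed direction is the true one: the $n=0$ case of the definition of strong complete monotonicity says $y\mapsto yf(y)$ is non-negative and strictly decreasing, so $\beta x\,f(\beta x)> x f(x)$ and hence $\beta f(\beta x)> f(x)$; the model example $f(y)=1/y^{2}=\int_0^\infty e^{-yt}t\,dt$ with $\beta=\tfrac{1}{2}$ and $x=1$ gives $\beta f(\beta x)=2>1=f(x)$. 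So star-shapedness cannot produce (iii) and (iv) as written. To be fair, the paper's own proof makes the identical jump, so the discrepancy lies in the statement of parts (iii) and (iv) rather than in your strategy --- but as a proof of the theorem as stated, this step fails, and you should either flag the reversal or prove the corrected inequalities.
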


\begin{proof}
	Define the function $h(x)$ by
\begin{align*}
		h(x) =
		\begin{cases}
			0, & \text{if } x = 0, \\[6pt]
			\dfrac{1}{(-1)^n I_R^{(n)}(x)}, & \text{if } x > 0.
		\end{cases}
\end{align*}
Since $(-1)^n I_R^{(n)}(x)$ is strongly completely monotonic for all integers $n \geq 2$, it follows from Lemma \ref{scm concequence} that $h(x)$ is star-shaped. Consequently, $h(x)$ satisfy the inequalities
\begin{align}\label{superadditive rama int}
		\dfrac{1}{(-1)^n I_R^{(n)}(x)} + \dfrac{1}{(-1)^n I_R^{(n)}(y)} \leq \dfrac{1}{(-1)^n I_R^{(n)}(x+y)},
\end{align}
	and
\begin{align}\label{star shaped rama int}
		\dfrac{1}{(-1)^n I_R^{(n)}(\beta x)} \leq \beta \, \dfrac{1}{(-1)^n I_R^{(n)}(x)}, \quad \beta \in (0,1).
\end{align}
	Writing $n = 2m$ in \eqref{superadditive rama int} gives the first part and $n = 2m + 1$ reduces \eqref{superadditive rama int} to the second part. In similar lines, substitutions $n = 2m$ and $n = 2m + 1$ in \eqref{star shaped rama int} establish respectively the third and fourth part of the result.
\end{proof}

So far, we have discussed the class of completely monotone functions and its subclass, the strongly completely monotone functions. We now turn to another class, closely related to completely monotone functions, known as the class of Bernstein functions.

\begin{definition}\cite{Widder_1941_The Laplace Transform}
	A function $f : (0, \infty) \to \mathbb{R}$ is called a \emph{Bernstein function} if it is non-negative, infinitely differentiable, and its derivatives satisfy
\begin{align*}
		(-1)^n f^{(n)}(x) \geq 0 \quad \text{for all } n \in \mathbb{N}.
\end{align*}
\end{definition}
In other words, a function is called a Bernstein function if its derivative is completely monotonic. However, the converse does not always hold, that is, if a function is completely monotonic, its antiderivative may or may not be a Bernstein function. For example, the completely monotonic function $\dfrac{1}{x^2}$ on $(0,\infty)$ has the primitive $-\dfrac{1}{x}$, which is not a Bernstein function on $(0,\infty)$ because it is not non-negative.

In the following, we present a theorem that provides a characterization of this converse.

\begin{theorem}\cite{Schilling_2012_bernstein function}\label{antideri bernstein thm}
	Let $f$ be a completely monotonic function given by
\begin{align*}
		f(x) = a + \int_0^\infty e^{-x t}  d\mu(t), \quad \text{with } a \geq 0 \text{ and } x \in (0, \infty).
\end{align*}
	Then $f$ admits an antiderivative $g$ that is a Bernstein function if, and only if, the representing measure $\mu$ satisfies the integrability condition
\begin{align*}
		\int_0^\infty \dfrac{1}{1 + t}  d\mu(t) < \infty.
\end{align*}
\end{theorem}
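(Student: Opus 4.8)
\noindent\emph{Proof proposal.}\quad The plan is to reduce the existence of a Bernstein antiderivative to a single integrability statement for $f$ near the origin, and then to identify that statement with the stated condition on $\mu$. Since $f$ is continuous (indeed analytic) on $(0,\infty)$, its antiderivatives there form a one-parameter family $g+c$; such a primitive is a Bernstein function exactly when $g\ge 0$ on $(0,\infty)$, because $g'=f$ is completely monotone by hypothesis and $g\in C^\infty$ follows from $f\in C^\infty$. So the whole question is whether \emph{some} antiderivative of $f$ can be made non-negative, which hinges on the behaviour of $f$ at $0^{+}$.

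The first step I would carry out is the computation
\begin{align*}
\int_0^1 f(s)\,ds \;=\; a+\int_0^\infty \frac{1-e^{-t}}{t}\,d\mu(t),
\end{align*}
valid by Tonelli's theorem since every integrand is non-negative. Combining it with the elementary two-sided estimate $\dfrac{1-e^{-t}}{t}\asymp\dfrac{1}{1+t}$ on $(0,\infty)$, i.e. bounded above and below by positive constant multiples of $(1+t)^{-1}$, yields the equivalence
\begin{align*}
\int_0^1 f(s)\,ds<\infty \qquad\Longleftrightarrow\qquad \int_0^\infty \frac{1}{1+t}\,d\mu(t)<\infty .
\end{align*}
The same estimate restricted to $(0,1]$ also shows that $\mu$ is finite on bounded sets, which legitimises the interchange and guarantees $g(x)<\infty$ for every $x>0$ in the construction below.

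For sufficiency, assume $\int_0^\infty(1+t)^{-1}\,d\mu(t)<\infty$, so $f\in L^1(0,1)$. I would set $g(x):=\int_0^x f(s)\,ds$; by Tonelli this equals $ax+\int_0^\infty\frac{1-e^{-xt}}{t}\,d\mu(t)$, which is non-negative since $a\ge 0$ and $1-e^{-xt}\ge 0$. The fundamental theorem of calculus gives $g'=f$, and $g\in C^\infty$; since $f$ is completely monotone, $g$ is a Bernstein antiderivative of $f$. For necessity I would argue by contraposition: if $\int_0^\infty(1+t)^{-1}\,d\mu(t)=\infty$ then $\int_0^1 f(s)\,ds=\infty$, and for any antiderivative $g$ and $0<x<1$ one has $g(x)=g(1)-\int_x^1 f(s)\,ds$, which tends to $-\infty$ as $x\to0^{+}$ by monotone convergence ($f\ge 0$); hence no antiderivative of $f$ is non-negative on $(0,\infty)$, so none can be a Bernstein function.

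I do not anticipate a genuine obstacle: the argument is short once the above reduction is in place. The points that require a little care are the two applications of Tonelli's theorem, together with the attendant finiteness of $\mu$ on sets near $0$, the two-sided comparison $\frac{1-e^{-t}}{t}\asymp\frac{1}{1+t}$, and the endpoint limit $x\to0^{+}$ in the necessity direction. If one prefers to bypass the abstract definition of a Bernstein function, the explicit formula $g(x)=ax+\int_0^\infty\frac{1-e^{-xt}}{t}\,d\mu(t)$ can be invoked directly as the standard integral (L\'evy--Khintchine type) representation of a Bernstein function, making the conclusion immediate.
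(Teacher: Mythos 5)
The paper does not prove this statement at all: it is quoted from the Schilling--Song--Vondra\v{c}ek monograph as a known characterization, so there is no internal proof to compare yours against. Judged on its own, your argument is correct and complete. The key reduction is sound: any antiderivative $g$ of $f$ on $(0,\infty)$ automatically has $g\in C^\infty$ and $g'=f$ completely monotone, so being a Bernstein function amounts to non-negativity; and since $f\ge 0$ forces $g$ to be non-decreasing, some translate $g+c$ is non-negative precisely when $\lim_{x\to 0^+}g(x)>-\infty$, i.e.\ when $f\in L^1(0,1)$. The Tonelli computation $\int_0^1 f(s)\,ds=a+\int_0^\infty\frac{1-e^{-t}}{t}\,d\mu(t)$ (valid in $[0,\infty]$ for non-negative integrands, so it covers the necessity direction too) together with the two-sided comparison $\frac{1}{1+t}\le\frac{1-e^{-t}}{t}\le\frac{2}{1+t}$ --- the lower bound being equivalent to $e^{t}\ge 1+t$ --- finishes the equivalence. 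This is essentially the standard argument one would find in the cited reference, where the condition on $t\,d\mu(t)$ is exactly the integrability requirement in the L\'evy--Khintchine representation $g(x)=a+bx+\int_0^\infty(1-e^{-xt})\,\tilde\mu(dt)$ of a Bernstein function; your closing remark that the explicit formula for $g$ can be matched to that representation is the quickest route if one is willing to quote it.
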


We now consider the antiderivative of the Ramanujan integral and show that it turns out to be a Bernstein function.

\begin{theorem}\label{rama int anti der berns}
	The Ramanujan integral $I_R(x)$, defined in \eqref{ramanujan int eq}, admits an antiderivative $\tilde{I}_R(x)$, which is a Bernstein function. Moreover, it possesses the following explicit representation
\begin{align*}
		\tilde{I}_R(x) = a + \int_{0}^{\infty} \left(1 - e^{-xt}\right) \dfrac{dt}{t\left(\pi^2 + \log^2 t\right)} \, ,
\end{align*}
	where $a$ is a non-negative constant given by $a = \tilde{I}_R(0+)$.
\end{theorem}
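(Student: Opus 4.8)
The plan is to invoke Theorem~\ref{antideri bernstein thm} directly, with $a = 0$ (there is no constant term in $I_R$) and representing measure $d\mu(t) = \frac{dt}{t(\pi^2+\log^2 t)}$ on $(0,\infty)$. First I would verify the integrability condition $\int_0^\infty \frac{1}{1+t}\,d\mu(t) = \int_0^\infty \frac{dt}{(1+t)\,t(\pi^2+\log^2 t)} < \infty$. This requires checking the two endpoints. Near $t=0$, the integrand behaves like $\frac{1}{t\log^2 t}$ (since $\pi^2+\log^2 t \sim \log^2 t$ and $1+t\to 1$), and $\int_0 \frac{dt}{t\log^2 t}$ converges because the antiderivative $-1/\log t$ is finite at $0$. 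Near $t=\infty$, the integrand behaves like $\frac{1}{t^2\log^2 t}$, which is integrable at infinity (dominated by $1/t^2$). On any compact subinterval of $(0,\infty)$ the integrand is continuous, hence bounded. So the integral is finite, and Theorem~\ref{antideri bernstein thm} guarantees that $I_R$ admits an antiderivative $\tilde I_R$ that is a Bernstein function.

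Next I would establish the explicit representation. By the fundamental theorem of calculus applied to the Laplace representation, any antiderivative of $I_R(x) = \int_0^\infty e^{-xt}\,d\mu(t)$ differs by a constant from $x \mapsto -\int_0^\infty \frac{e^{-xt}}{t}\,d\mu(t)$, but that inner integral diverges at $t=0$ (the $1/t$ factor compounds the $1/(t\log^2 t)$ singularity into $1/(t^2\log^2 t)$, which is non-integrable near $0$). The standard remedy, which is exactly how the Bernstein-function representation is built, is to differentiate under the integral sign the candidate $\tilde I_R(x) = a + \int_0^\infty (1-e^{-xt})\,\frac{dt}{t(\pi^2+\log^2 t)}$: the integrand and its $x$-derivative $e^{-xt}\cdot\frac{1}{\pi^2+\log^2 t}$ are dominated uniformly for $x$ in compact subsets of $(0,\infty)$ by integrable functions of $t$ (using $1-e^{-xt}\le \min(1,xt)$ near the endpoints to kill both singularities), so one may differentiate under the integral to get $\tilde I_R'(x) = \int_0^\infty e^{-xt}\frac{dt}{t(\pi^2+\log^2 t)} = I_R(x)$. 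Thus $\tilde I_R$ is indeed an antiderivative of $I_R$.

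Then I would check $\tilde I_R$ is a Bernstein function in the sense of the definition used in the paper: it is non-negative since $1-e^{-xt}\ge 0$ and $a\ge 0$, it is $C^\infty$ on $(0,\infty)$ by repeated differentiation under the integral (each derivative for $n\ge 1$ is $(-1)^{n+1}\int_0^\infty t^{n-1}e^{-xt}\frac{dt}{\pi^2+\log^2 t}$, dominated convergence applying as before), and $(-1)^n\tilde I_R^{(n)}(x) = (-1)^{n-1}I_R^{(n-1)}(x) = (-1)^n I_R^{(n)}(\cdot)$ evaluated correctly: more precisely $\tilde I_R^{(n)}(x) = I_R^{(n-1)}(x)$ for $n\ge 1$, so $(-1)^n\tilde I_R^{(n)}(x) = (-1)^n I_R^{(n-1)}(x) = -(-1)^{n-1}I_R^{(n-1)}(x) \le 0$ would be wrong --- I should instead note $(-1)^{n-1}I_R^{(n-1)}(x)\ge 0$ by Theorem~\ref{ramanujan int cm thm}, hence $(-1)^n\tilde I_R^{(n)}(x) = -(-1)^{n-1}\tilde I_R^{(n)}(x)$... the cleaner route is: the derivative of $\tilde I_R$ is $I_R$, which is completely monotone by Theorem~\ref{ramanujan int cm thm}, so by definition $\tilde I_R$ is a Bernstein function once non-negativity is checked. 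Finally, the constant $a$: letting $x\to 0^+$, monotone convergence gives $\tilde I_R(0^+) = a + \int_0^\infty (1-1)\,d\mu(t)\cdot$... actually $1-e^{-xt}\to 0$ pointwise and monotonically, so $\tilde I_R(0^+) = a$, identifying $a = \tilde I_R(0^+)\ge 0$.

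The main obstacle is purely the convergence bookkeeping at the endpoint $t=0$: one must be careful that the measure $\mu$ has an \emph{integrable} singularity at $0$ (it does, $\int_0 \frac{dt}{t\log^2 t}<\infty$) so that $I_R$ itself and all the $\tilde I_R^{(n)}$ for $n\ge 1$ are finite, while simultaneously the naive antiderivative $-\int \frac{e^{-xt}}{t}d\mu(t)$ is \emph{not} finite --- this is precisely the situation Theorem~\ref{antideri bernstein thm} is designed for, and the subtraction of $1$ (i.e. the kernel $\frac{1-e^{-xt}}{t}$) is what regularizes it. No genuinely hard estimate is needed; everything reduces to the elementary convergence of $\int_0^1 \frac{dt}{t\log^2 t}$ and $\int_1^\infty \frac{dt}{t^2}$ together with dominated/monotone convergence to justify differentiation under the integral sign and the limit $x\to 0^+$.
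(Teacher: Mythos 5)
Your proposal is correct and follows essentially the same route as the paper: both hinge on verifying the integrability condition $\int_0^\infty \frac{dt}{(1+t)\,t(\pi^2+\log^2 t)}<\infty$ so that Theorem~\ref{antideri bernstein thm} applies, and then identifying the explicit antiderivative. The only cosmetic differences are that the paper evaluates this integral exactly (it equals $\tfrac12$, via the substitution $\log t=u$ and the identity $\frac{1}{1+e^{-u}}+\frac{1}{1+e^{u}}=1$) where you argue by endpoint asymptotics, and it obtains the explicit representation by integrating $I_R$ and applying Fubini where you differentiate the candidate under the integral sign; both variants are sound.
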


\begin{proof}
	Consider the integral
\begin{align}\label{int for anti deri barnes}
		I = \int_{0}^{\infty} \dfrac{dt}{t(1 + t)\left(\pi^2 + \log^2 t\right)} \, .
\end{align}
	First, we examine the convergence of the integral $I$ by substituting $\log t = u$ in \eqref{int for anti deri barnes}. Thus, \eqref{int for anti deri barnes} becomes
	
\begin{align*}
		I = \int_{-\infty}^{\infty} \dfrac{du}{(1 + e^u)(\pi^2 + u^2)}.
\end{align*}
	Splitting the integral at zero and rewriting gives
\begin{align*}
		I = \int_{0}^{\infty} \left( \dfrac{1}{1 + e^{-u}} + \dfrac{1}{1 + e^u} \right) \dfrac{du}{\pi^2 + u^2}= \int_{0}^{\infty} \dfrac{du}{\pi^2 + u^2},
\end{align*}
	which is equal to $\tfrac{1}{2}$, and this establishes the convergence of the integral given in \eqref{int for anti deri barnes}. Therefore, by Theorem \ref{antideri bernstein thm},  $I_R(x)$ admits an antiderivative which is a Bernstein function.

To determine the explicit expression for $\tilde{I}_R(x)$, we integrate \eqref{ramanujan int eq} and get
\begin{align*}
		\int_{0}^{x} I_R(y)  dy &= \int_{0}^{x} \left( \int_{0}^{\infty} e^{-yt} \dfrac{dt}{t\left( \pi^2 + \log^2 t \right)} \right) dy.
\end{align*}
	Applying Fubini's theorem along with some computations lead to the conclusion, which completes the proof.
\end{proof}

\begin{remark}
In Theorem \ref{rama int anti der berns}, we have established that the antiderivative of the Ramanujan integral, $\tilde{I}_R(x)$, is a Bernstein function; however, it remains open to determine whether it is also a complete Bernstein function.
\end{remark}
%In addition to the standard definition of a Bernstein function, there exists an alternative characterization that can be used to determine whether a function belongs to this class. This characterization is stated in the following theorem.
%\begin{theorem}\cite{Schilling_2012_bernstein function}\label{bernstein fnc def}
%	A function $f: (0,\infty) \to \mathbb{R}$ is a Bernstein function if, and only if, it admits the representation
%	\begin{align}\label{bernstein_rep}
%		f(x) = a + bx + \int_0^\infty (1 - e^{-xt})  d\mu(t),
%	\end{align}
%	where $a, b \geq 0$ and $\mu$ is a measure on $(0,\infty)$ satisfying the condition
%	\begin{align*}
%		\int_0^\infty \dfrac{t}{1+t}  d\mu(t) < \infty.
%	\end{align*}
%\end{theorem}
%

%\begin{theorem}
%Let $n$ be a non-negative integer and $x > 0$. Then, as $x \to \infty$, the following asymptotic expansion holds
%\begin{align}\label{rama asym}
%	\int_{0}^{\infty} e^{-xt} \dfrac{t^{n-1}}{a^2 + \log^2 t}  dt \sim x^{-n} \sum_{k=0}^{\infty} \Phi_k(a, n) (\log x)^{-k-1},
%\end{align}
%where the coefficients $\Phi_k(a, n)$ are determined by the generating relation
%\begin{align}\label{rama asym coeff}
%	\sum_{k=0}^{\infty} \Phi_k(a, n) \dfrac{x^k}{k!} = \dfrac{\sin(ax)}{a} \Gamma(n + x).
%\end{align}
%\end{theorem}

The next theorem concerns a Turan-type function defined in terms of the derivatives of the Ramanujan integral and establishes its complete monotonicity under certain parametric conditions.

Exploring Turan type inequalities for various classes of functions is one of the well known research. Initially investigations were carried out for finding the conditions under which the expression
\begin{align}\label{Turan-type-expression}
P_n^2(x)-P_{n+1}(x)P_{n-1}(x)
\end{align}
is positive for certain specific polynomials. Later it was extended to various classes of functions by replacing $P_n(x)$ to $f(x)$ in \eqref{Turan-type-expression} by taking appropriate modifications to study if this expression is positive or negative. Recent interest is on analyzing various other properties of \eqref{Turan-type-expression}. For example, in \cite{mehrez_ramanujan type entire fun}, this expression was studied extensively for specific Ramanujan type entire functions. Motivated by this, we are interested in finding the completely monotonic behaviour of the derivatives of the Ramanujan integral $I_R(x)$ satisfying the expression analogous to \eqref{Turan-type-expression} by  replacing $P_n(x)$ with our integral $I_R^{(n)}(x)$.
%Previously, in \cite{Alzer_1998_Inequality polygamma_siam, Alzer_on cm fun_rama jour_2006, mehrez_ramanujan type entire fun}, some work has been done related to the complete monotonicity of functions involving gamma, digamma, and polygamma functions, as well as Turan-type inequalities for Ramanujan-type entire functions.
The proof of our next theorem on complete monotonicity of Turan type functions, requires the following result.

\begin{lemma}\label{g(x) decreasing lemma}
	For $x>0$, the function
	\begin{align*}
		g(x)=\dfrac{1}{x}-\dfrac{2 \log(x)}{x\left(\pi^2+\log ^2x\right)}
	\end{align*}
	is strictly decreasing.
\end{lemma}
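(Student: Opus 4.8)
The plan is to prove Lemma~\ref{g(x) decreasing lemma} by showing directly that $g'(x)<0$ for every $x>0$, reducing this inequality to the positivity of a single quartic polynomial in the variable $L=\log x$. First I would differentiate: writing $g(x)=\frac1x-2\,\dfrac{\log x}{x(\pi^2+\log^2x)}$ and applying the quotient rule to the second summand, all the terms can be placed over the common denominator $x^2(\pi^2+\log^2x)^2$, which is positive on $(0,\infty)$. Hence the sign of $g'(x)$ coincides with that of
\begin{equation*}
N(x)=-\bigl(\pi^2+\log^2x\bigr)^2-2\bigl(\pi^2+\log^2x\bigr)(1-\log x)+4\log^2x ,
\end{equation*}
so it suffices to show $N(x)<0$ for all $x>0$.

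Next I would substitute $L=\log x$, which ranges over all of $\mathbb{R}$ as $x$ ranges over $(0,\infty)$, and expand. Collecting powers of $L$ turns $-N(x)$ into the quartic
\begin{equation*}
Q(L)=L^4-2L^3+(2\pi^2-2)L^2-2\pi^2L+\pi^4+2\pi^2 ,
\end{equation*}
so the lemma becomes the statement that $Q(L)>0$ for every $L\in\mathbb{R}$. The key step is the decomposition
\begin{equation*}
Q(L)=(L^2-L)^2+\bigl[(2\pi^2-3)L^2-2\pi^2L+\pi^4+2\pi^2\bigr].
\end{equation*}
The bracketed quadratic has positive leading coefficient $2\pi^2-3>0$ and discriminant $4\pi^4-4(2\pi^2-3)(\pi^4+2\pi^2)=-8\pi^2(\pi^4-3)$, which is negative since $\pi^4>3$; hence it is strictly positive on $\mathbb{R}$. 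As $(L^2-L)^2\ge 0$, we obtain $Q(L)>0$, and therefore $N(x)<0$ and $g'(x)<0$, which proves that $g$ is strictly decreasing on $(0,\infty)$.

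There is no conceptual obstacle here; the only real work is bookkeeping — carrying out the differentiation and the expansion of $-N$ into $Q$ without sign errors, and choosing the right square, $(L^2-L)^2$, to subtract so that the remainder is a quadratic whose discriminant is visibly negative. An essentially equivalent route, which one might prefer, is to observe that $g(x)=\dfrac{d}{dx}\log\!\Bigl(\dfrac{x}{\pi^2+\log^2x}\Bigr)$, so that the lemma asserts the log-concavity of $x\mapsto x/(\pi^2+\log^2x)$; differentiating this logarithm twice leads to the same quartic inequality.
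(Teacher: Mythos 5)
Your proof is correct and follows essentially the same route as the paper: differentiate, reduce the sign of $g'$ to the positivity of the quartic $Q(L)=L^4-2L^3+(2\pi^2-2)L^2-2\pi^2L+\pi^4+2\pi^2$ in $L=\log x$, and split off the square $(L^2-L)^2$. The only cosmetic difference is that the paper certifies the leftover quadratic $(2\pi^2-3)L^2-2\pi^2L+\pi^4+2\pi^2$ by writing it as $(2\pi^2-4)L^2+2\pi^2+(L-\pi^2)^2$, whereas you use its negative discriminant; both are fine.
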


\begin{proof}
	Differentiating $g(x)$ with respect to $x$ gives
	\begin{align*}
		g'(x)= -\dfrac{1}{x^2} - \dfrac{2 \left(\pi^2+\log^2 x\right)-2\log(x)\left(\pi^2+\log^2 x+2\log(x)\right)}{x^2 \left(\pi^2+\log^2 x\right)^2}.
	\end{align*}
	After simplification, this can be written as
	\begin{align*}
		g'(x)=-\dfrac{1}{x^2 \left(\pi^2+\log^2 x\right)^2}
		\underbrace{\left(\log^2 x(2\pi^2-4)+2\pi^2+(\log x -\log^2 x)^2+(\log x -\pi^2)^2\right)}_{p(x)}.
	\end{align*}
	Since $p(x)>0$ for all $x>0$, it follows that $g'(x)<0$. Hence, $g(x)$ is strictly decreasing on $(0,\infty)$.
\end{proof}
The decreasing behavior of the function can also be observed from the graph shown below.
\begin{figure}[H]
%	\footnotesize
%	\stackunder[5pt]{
    \includegraphics[scale=0.7]{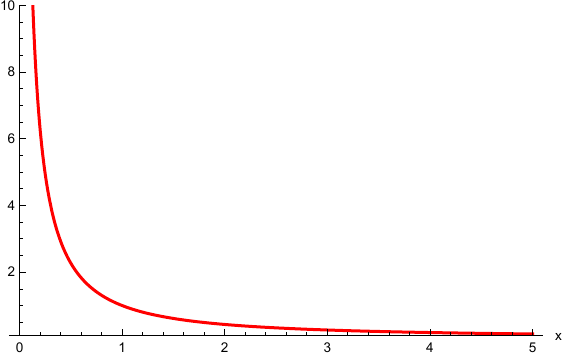}
%   }{}
	\caption{Graph of $g(x)$}
	\label{fig:decreasing nature of g(x)}
\end{figure}

\begin{theorem}\label{main theorem}
	Let $I_R^{(n)}(x)$ denote the $n$-th derivative of the Ramanujan integral defined in \eqref{rama int nth deri}.
For any integer $n \geq 2$ and $x>0$, define
\begin{align*}
	H_n(x;\alpha) = \left(I_R^{(n)}(x)\right)^2 - \alpha I_R^{(n-1)}(x) I_R^{(n+1)}(x), \qquad x>0.
\end{align*}
Then the following statements hold.
\begin{enumerate}[label=(\roman*)]
\rm \item \label{part 1}  If $H_n(x;\alpha)$ is completely monotonic on $(0,\infty)$, then the parameter $\alpha$ must satisfy
%\begin{align*}
$
\displaystyle \alpha \leq \dfrac{n-1}{n}
$, for $n\geq 2$.
%\end{align*}
\item \label{part 2} If the parameter $\alpha$ is chosen such that
%\begin{align*}
$
\displaystyle
\alpha \leq \dfrac{n-2}{n-1}
$,
$n\geq 2$,
%\end{align*}
then $H_n(x;\alpha)$ is completely monotonic on $(0,\infty)$.
\end{enumerate}
\end{theorem}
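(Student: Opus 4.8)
The plan is to settle part \ref{part 1} by an asymptotic argument as $x\to\infty$, and part \ref{part 2} by converting complete monotonicity into the positivity of a convolution kernel, which I then reduce to a variance estimate handled with Lemma \ref{g(x) decreasing lemma}. For part \ref{part 1}, complete monotonicity of $H_n(\cdot;\alpha)$ forces $H_n(x;\alpha)\ge0$ on $(0,\infty)$, so it suffices to see that $\alpha>\frac{n-1}{n}$ makes $H_n(x;\alpha)<0$ for all large $x$. Plugging \eqref{rama asym}--\eqref{value of phi} (with $a=\pi$) into \eqref{rama int nth deri} gives, for each fixed $m\ge1$,
\[
(-1)^m I_R^{(m)}(x)=\frac{\Gamma(m)}{x^{m}\log^{2}x}\Bigl(1+\frac{\psi(m)}{\log x}+O(\log^{-2}x)\Bigr),\qquad x\to\infty .
\]
Since $H_n=\bigl((-1)^nI_R^{(n)}\bigr)^2-\alpha\,\bigl((-1)^{n-1}I_R^{(n-1)}\bigr)\bigl((-1)^{n+1}I_R^{(n+1)}\bigr)$ and $\Gamma(n-1)\Gamma(n+1)=\frac{n}{n-1}\Gamma(n)^2$, multiplying these expansions yields
\[
H_n(x;\alpha)=\frac{\Gamma(n)^{2}}{x^{2n}\log^{4}x}\Bigl[\Bigl(1-\tfrac{\alpha n}{n-1}\Bigr)+\frac{2\psi(n)-\tfrac{\alpha n}{n-1}\bigl(\psi(n-1)+\psi(n+1)\bigr)}{\log x}+\cdots\Bigr].
\]
For $\alpha>\frac{n-1}{n}$ the leading constant $1-\frac{\alpha n}{n-1}$ is negative, so $H_n(x;\alpha)<0$ eventually; this is the contrapositive of \ref{part 1}. (At $\alpha=\frac{n-1}{n}$ that constant is $0$ and the next coefficient is $2\psi(n)-\psi(n-1)-\psi(n+1)=\frac1{n-1}-\frac1n=\frac1{n(n-1)}>0$, so the bound is attained, not exceeded.)

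For part \ref{part 2}, write $f_m:=(-1)^mI_R^{(m)}$ and $\phi_m(t):=t^{m-1}/(\pi^2+\log^2 t)$, so that $f_m=\mathscr{L}[\phi_m]$ by \eqref{rama int nth deri} and $f_m'=-f_{m+1}$. First, $f_{n-1}f_{n+1}=\mathscr{L}[\phi_{n-1}*\phi_{n+1}]$ with $\phi_{n-1}*\phi_{n+1}\ge0$, so $f_{n-1}f_{n+1}$ is completely monotone by Theorem \ref{Bernstein theorem}; hence from
\[
H_n(x;\alpha)=H_n\!\Bigl(x;\tfrac{n-2}{n-1}\Bigr)+\Bigl(\tfrac{n-2}{n-1}-\alpha\Bigr)f_{n-1}(x)f_{n+1}(x)
\]
it suffices to treat $\alpha=\frac{n-2}{n-1}$ (this is vacuous for $n=2$, so take $n\ge3$). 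Next, $(f_{n-1}^2)''=2f_n^2+2f_{n-1}f_{n+1}$ gives the identity $\phi_{n-1}*\phi_{n+1}=\tfrac{w^2}{2}\,\phi_{n-1}*\phi_{n-1}-\phi_n*\phi_n$, so $H_n(x;\alpha)=\mathscr{L}[K_\alpha](x)$ with $K_\alpha(w)=(1+\alpha)(\phi_n*\phi_n)(w)-\tfrac{\alpha w^2}{2}(\phi_{n-1}*\phi_{n-1})(w)$; by Theorem \ref{Bernstein theorem} it is enough that $K_\alpha(w)\ge0$ for all $w>0$. Using $\phi_n(t)=t\phi_{n-1}(t)$ and substituting $s=rw$, this is exactly
\[
\operatorname{Var}_{\nu_w}(r)\le\frac{1}{4(2n-3)}\qquad\text{for all }w>0,
\]
where $\nu_w$ is the symmetric probability density on $(0,1)$ proportional to $\phi_{n-1}(rw)\phi_{n-1}((1-r)w)$, i.e. to $(r(1-r))^{n-2}\bigl(\pi^2+\log^2(rw)\bigr)^{-1}\bigl(\pi^2+\log^2((1-r)w)\bigr)^{-1}$.

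For the variance estimate Lemma \ref{g(x) decreasing lemma} enters: since $\phi_m'(t)/\phi_m(t)=\frac{m-2}{t}+g(t)$, the monotone decrease of $g$ gives
\[
\frac{d}{dr}\log\bigl[\phi_{n-1}(rw)\phi_{n-1}((1-r)w)\bigr]=(n-3)\Bigl(\tfrac1r-\tfrac1{1-r}\Bigr)+w\bigl[g(rw)-g((1-r)w)\bigr]>0
\]
for $0<r<\tfrac12$ when $n\ge3$, so $\nu_w$ is symmetric and unimodal about $\tfrac12$. For $n=3$ this already forces $\operatorname{Var}_{\nu_w}(r)\le\tfrac1{12}=\tfrac1{4(2n-3)}$, since the uniform law maximises the variance among symmetric unimodal laws on $(0,1)$. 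For $n\ge4$ one also exploits the $\mathrm{Beta}(n-1,n-1)$ factor $(r(1-r))^{n-2}$: writing the $\nu_w$ density as the $\mathrm{Beta}(n-1,n-1)$ density times $v(r):=\bigl(\pi^2+\log^2(rw)\bigr)^{-1}\bigl(\pi^2+\log^2((1-r)w)\bigr)^{-1}$, one has
\[
\operatorname{Var}_{\nu_w}(r)-\operatorname{Var}_{\mathrm{Beta}(n-1,n-1)}(r)=\frac{\operatorname{Cov}_{\mathrm{Beta}(n-1,n-1)}\!\bigl((r-\tfrac12)^2,\,v\bigr)}{\mathbb{E}_{\mathrm{Beta}(n-1,n-1)}[v]},
\]
and when $v$ is a non-increasing function of $|r-\tfrac12|$ that covariance is $\le0$ by Chebyshev's correlation inequality (it pairs a non-decreasing and a non-increasing function of $|r-\tfrac12|$); hence $\operatorname{Var}_{\nu_w}(r)\le\operatorname{Var}_{\mathrm{Beta}(n-1,n-1)}(r)=\tfrac1{4(2n-1)}<\tfrac1{4(2n-3)}$.

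The step I expect to be the main obstacle is that $v$ need not be unimodal for every $w$: with $L(t)=-\log(\pi^2+\log^2 t)$ one has $(\log v)''(\tfrac12)=2w^2L''(w/2)$, and $L''(t)$ has the sign of $(\log t)^3+(\log t)^2+\pi^2\log t-\pi^2$, which becomes positive once $\log t$ passes a root lying in $(0,1)$; thus for large $w$ the density $v$ has a \emph{minimum} at $r=\tfrac12$ and the covariance above can be slightly negative. For such $w$ I would bound $v$ directly — $\bigl(\pi^2+(\log w+\log r)^2\bigr)^{-1}$ is slowly varying in $r$ on compact subsets of $(0,1)$, so $\nu_w$ departs from $\mathrm{Beta}(n-1,n-1)$ only by a factor oscillating by $O(1/\log w)$ and $\operatorname{Var}_{\nu_w}(r)=\tfrac1{4(2n-1)}+o(1)$ as $w\to\infty$ — leaving a finite intermediate window of $w$ to be closed by explicit estimates. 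It is precisely when one tries to raise $\alpha$ past $\frac{n-2}{n-1}$ toward the necessary value $\frac{n-1}{n}$ that this approach loses its margin, consistent with the graphical evidence referred to for the ranges that remain open.
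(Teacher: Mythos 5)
Your part \ref{part 1} is correct and is essentially the paper's argument: both normalise by $x^{m}\log^{2}x$, invoke \eqref{rama asym}--\eqref{value of phi}, and read off the bound from $\Gamma(n)^{2}/\bigl(\Gamma(n-1)\Gamma(n+1)\bigr)=\tfrac{n-1}{n}$. Your part \ref{part 2} also starts on the paper's track: the same reduction to $\alpha=\tfrac{n-2}{n-1}$ by adding a nonnegative multiple of the completely monotone product $f_{n-1}f_{n+1}$, and the same conversion of the problem into the pointwise nonnegativity of a convolution kernel (your $K_\alpha(w)$ is the paper's $h_n(t)$ in a different but equivalent normal form).

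The proof breaks down, however, at the positivity of that kernel, and you say so yourself. Recasting $K_\alpha(w)\ge0$ as $\operatorname{Var}_{\nu_w}(r)\le\tfrac{1}{4(2n-3)}$ is fine, and the $n=3$ case via symmetric unimodality is fine, but for $n\ge4$ your comparison with $\mathrm{Beta}(n-1,n-1)$ rests on Chebyshev's correlation inequality, which needs $v(r)=\bigl(\pi^2+\log^2(rw)\bigr)^{-1}\bigl(\pi^2+\log^2((1-r)w)\bigr)^{-1}$ to be non-increasing in $|r-\tfrac12|$; as you observe, this fails for large $w$ (where $v$ has a local minimum at $r=\tfrac12$), and the "finite intermediate window of $w$ to be closed by explicit estimates" is never closed. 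That is a genuine gap, not a technicality: the whole content of part \ref{part 2} is a uniform-in-$w$ positivity statement, and an argument valid only for small and asymptotically large $w$ does not prove it. The paper avoids the problem by a different pairing of the factors: after symmetrising ($u=\tfrac t2(1+v)$) it groups each $\bigl(\pi^2+\log^2 s\bigr)^{-1}$ with exactly one power of $s$, so the first factor $I_1(v;t)=\prod_{\pm}\tfrac{s_\pm}{\pi^2+\log^2 s_\pm}$ with $s_\pm=\tfrac t2(1\pm v)$ has logarithmic $v$-derivative $\tfrac t2\bigl(g(s_+)-g(s_-)\bigr)<0$ by Lemma \ref{g(x) decreasing lemma}, hence is decreasing in $v$ for \emph{every} $t>0$ with no exceptional range; the remaining factor $I_2(v)=(1-v^2)^{n-3}\bigl(1-(2n-3)v^2\bigr)$ is an exact derivative with $\int_0^1 I_2\,dv=0$ and a single sign change at $v_0=(2n-3)^{-1/2}$, so $\int_0^1 I_1I_2\,dv\ge I_1(v_0;t)\int_0^1 I_2\,dv=0$. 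If you want to salvage your variance formulation, the analogous move is to put one factor of $r$ (resp.\ $1-r$) into $v$, i.e.\ compare against $\mathrm{Beta}(n-2,n-2)$ with the weight $\tfrac{rw}{\pi^2+\log^2(rw)}\cdot\tfrac{(1-r)w}{\pi^2+\log^2((1-r)w)}$, which \emph{is} unimodal in $|r-\tfrac12|$ by Lemma \ref{g(x) decreasing lemma}; as it stands, your proof of part \ref{part 2} is incomplete for $n\ge4$.
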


\begin{proof}
For the first part, by hypothesis, $H_n(x;\alpha)$ is completely monotonic. This, by Definition \ref{cmf definition}, means
	\begin{align*}
		H_n(x;\alpha) >0  \iff
		\alpha < \dfrac{ \left(I_R^{(n)}(x)\right)^2 }{ I_R^{(n-1)}(x)  I_R^{(n+1)}(x) }.
	\end{align*}
	This inequality can be rewritten in the form
\begin{align*}
		\alpha < \dfrac{ \left( x^n \log x  I_R^{(n)}(x) \right)^2 }{ \left( x^{n-1} \log x  I_R^{(n-1)}(x) \right) \left( x^{n+1} \log x  I_R^{(n+1)}(x) \right) }.
\end{align*}
	Taking the limit as $x \to \infty$, and using the asymptotic expansion given in \eqref{rama asym}, we obtain an equivalent inequality
%	\begin{align*}
%	\alpha \leq \lim_{x \to \infty} \dfrac{ \left( x^n \log x  I_R^{(n)}(x) \right)^2 }{ \left( x^{n-1} \log x  I_R^{(n-1)}(x) \right) \left( x^{n+1} \log x  I_R^{(n+1)}(x) \right) }.
%	\end{align*}
\begin{align*}
		\alpha \leq \lim_{x \to \infty} \dfrac{ \left( \displaystyle\sum_{k=0}^{\infty} \dfrac{\Phi_k(\pi, n)}{\log^k x} \right)^2 }{ \left( \displaystyle\sum_{k=0}^{\infty} \dfrac{\Phi_k(\pi, n-1)}{\log^k x} \right) \left( \displaystyle\sum_{k=0}^{\infty} \dfrac{\Phi_k(\pi, n+1)}{\log ^k x} \right) }.
\end{align*}
Simplifying the above inequality, we get
\begin{align*}
		\alpha \leq  \lim_{x \to \infty} \dfrac{ \left( \dfrac{1}{\log^2 x} \displaystyle\sum_{k=1}^{\infty} \dfrac{\Phi_k(\pi, n)}{\log^{k-1} x} \right)^2 }{ \left( \dfrac{1}{\log^2 x} \displaystyle\sum_{k=1}^{\infty} \dfrac{\Phi_k(\pi, n-1)}{\log^{k-1} x} \right) \left( \dfrac{1}{\log^2 x} \displaystyle\sum_{k=1}^{\infty} \dfrac{\Phi_k(\pi, n+1)}{\log^{k-1} x} \right)},
\end{align*}
	which gives
\begin{align*}
		\alpha \leq  & \dfrac{ \left( \Phi_1(\pi, n) \right)^2 }{ \Phi_1(\pi, n-1) \Phi_1(\pi, n+1) }.
\end{align*}
	Substituting the values of $\Phi_1(\pi, n)$, $\Phi_1(\pi, n-1)$ and $\Phi_1(\pi, n+1)$ from \eqref{value of phi}, we get
	\begin{align*}
		\alpha \leq \dfrac{ \Gamma(n)^2 }{ \Gamma(n-1) \Gamma(n+1) }.
	\end{align*}
Solving the right-hand side gives the desired upper bound on $\alpha$, which completes the proof of the first part.

To establish the second part, we first consider the case $\alpha = \tfrac{n-2}{n-1}$.
Clearly
\begin{align*}
	H_n\left(x;\dfrac{n-2}{n-1}\right) = \left(I_R^{(n)}(x)\right)^2 - \left(\dfrac{n-2}{n-1}\right) I_R^{(n-1)}(x) I_R^{(n+1)}(x), \quad x > 0.
\end{align*}

Using the integral representation of $I_R^{(n)}(x)$ given in \eqref{rama int nth deri}, the above expression becomes
\newline
$
\displaystyle
H_n\left(x;\dfrac{n-2}{n-1}\right)
$
\begin{align*}
	=
	\left( \int_{0}^{\infty} e^{-xt} \dfrac{t^{n-1}}{\pi^2 + \log^2 t} dt \right)^2 - \!\! \left(\dfrac{n-2}{n-1}\right)
	\!\!\left( \int_{0}^{\infty} e^{-xt} \dfrac{t^{n-2}}{\pi^2 + \log^2 t} dt \right)
	\!\!\left( \int_{0}^{\infty} e^{-xt} \dfrac{t^{n}}{\pi^2 + \log^2 t} dt \right).
\end{align*}
Applying the convolution property of the Laplace transform, we obtain
\begin{align*}
H_n\left(x;\dfrac{n-2}{n-1}\right)= \int_{0}^{\infty} e^{-xt} h_n(t) dt	
\end{align*}
where
\begin{align}\label{value of h}
h_n(t)= \dfrac{1}{(n-1)}\int_{0}^{t}\left(\dfrac{u^{n-2}}{\pi^2 +\log^2 u}\right) \left(\dfrac{(t-u)^{n-1}}{\pi^2 +\log^2 (t-u)}\right)  \left((2n-3)u-(n-2)t \right)du
\end{align}
Now, in order to establish the complete monotonicity of
$H_n\left(x;\tfrac{n-2}{n-1}\right)$, we only need to verify that $h_n(t) \geq 0$ for all $t>0$.  For that, we substitute $u = \tfrac{t}{2}(1+v)$ in \eqref{value of h}, so that the integral becomes
\begin{align*}
h_n(t)= \dfrac{t^3}{8(n-1)}\int_{-1}^{1} \psi_n(v) dv,
\end{align*}
where
\newline
$
\displaystyle
\psi_n(v)
$
\begin{align*}
= \left(\dfrac{\left(\dfrac{t}{2}(1+v)\right)^{n-2}}{\pi^2 +\log^2 \left(\dfrac{t}{2}(1+v)\right)}\right) \left(\dfrac{\left(\dfrac{t}{2}(1-v)\right)^{n-2}}{\pi^2 +\log^2 \left(\dfrac{t}{2}(1-v)\right)}\right) \left(1-v^2(2n-3)  +2v(n-2) \right).
\end{align*}
Due to the symmetry of the integrand, the odd part vanishes upon integration, and the integral reduces to the even part. This gives
\newline
$
\displaystyle
h_n(t)
$
\begin{align*}
= \dfrac{t^3}{4(n-1)}\int_{0}^{1} \left(\dfrac{\left(\dfrac{t}{2}(1+v)\right)^{n-2}}{\pi^2 +\log^2 \left(\dfrac{t}{2}(1+v)\right)}\right) \left(\dfrac{\left(\dfrac{t}{2}(1-v)\right)^{n-2}}{\pi^2 +\log^2 \left(\dfrac{t}{2}(1-v)\right)}\right)  \left(1-(2n-3) v^2  \right) dv.
\end{align*}
We write the integral $h_n(t)$ as
\begin{align*}
	h_n(t)= \dfrac{t^3}{4(n-1)} \left(\dfrac{t}{2}\right)^{2(n-3)} \int_{0}^{1} I_1(v;t) I_2(v;t) dv,
\end{align*}
with $I_1(v;t)$ and $I_2(v;t)$ given respectively by
\begin{align}\label{value of I_1}
I_1(v;t)=	\left(\dfrac{\dfrac{t}{2}(1+v)}{\pi^2 +\log^2 \dfrac{t}{2}(1+v)}\right) \left(\dfrac{\dfrac{t}{2}(1-v)}{\pi^2 +\log^2 (\dfrac{t}{2}(1-v))}\right)
\end{align}
and
\begin{align} \label{value of I_2}
I_2(v;t)= (1-v^2)^{n-3} \left(1-(2n-3) v^2  \right) .
\end{align}
By taking logarithmic derivative of \eqref{value of I_1} with respect to $v$, we obtain
\begin{align*}
	\dfrac{I_1^{'}(v;t)}{I_1(v;t)}
	&= \dfrac{t}{2} \left( \dfrac{1}{\tfrac{t}{2}(1+v)}
	- \dfrac{2 \log\left(\tfrac{t}{2}(1+v)\right)}{\tfrac{t}{2}(1+v)\left(\pi^2+\log^2\left(\tfrac{t}{2}(1+v)\right)\right)} \right) \\
	&\quad - \dfrac{t}{2} \left( \dfrac{1}{\tfrac{t}{2}(1-v)}
	+ \dfrac{2 \log\left(\tfrac{t}{2}(1-v)\right)}{\tfrac{t}{2}(1-v)\left(\pi^2+\log^2\left(\tfrac{t}{2}(1-v)\right)\right)} \right).
\end{align*}
Since the function
 \begin{align*}
	\dfrac{1}{x}-\dfrac{2 \log(x)}{x\left(\pi^2+\log^2 x \right)}
\end{align*}
is decreasing, by Lemma \ref{g(x) decreasing lemma}, it follows that $I_1(v;t)$ is also decreasing. Hence, we can write
\begin{equation}\label{sign of I_1 in (0,1)}
	\begin{aligned}
		I_1(v;t) &\geq I_1\big((2n-3)^{-1/2};t\big),
		&& \text{for } 0 < v \leq (2n-3)^{-1/2}, \quad \text{and} \\[6pt]
		I_1(v;t) &\leq I_1\big((2n-3)^{-1/2};t\big),
		&& \text{for } (2n-3)^{-1/2} \leq v < 1.
	\end{aligned}
\end{equation}
For the function $I_2(v;t)$ defined in \eqref{value of I_2}, we have
\begin{equation}\label{sign of I_2 in (0,1)}
\begin{aligned}
		I_2(v;t) &> 0,  && \text{for } 0 < v < (2n-3)^{-1/2}, \quad \text{and} \\[6pt]
		I_2(v;t) &< 0,  && \text{for } (2n-3)^{-1/2} < v < 1.
\end{aligned}
\end{equation}
Thus, using \eqref{sign of I_1 in (0,1)} and \eqref{sign of I_2 in (0,1)}, we obtain
\begin{align*}
	I_1(v;t) I_2(v;t) \geq I_1\left((2n-3)^{-1/2};t\right) I_2(v;t),
	\quad v \neq (2n-3)^{-1/2},  v \in (0,1).
\end{align*}
Integrating over $(0,1)$ yields
\begin{align}\label{inequality involving I_1 and I_2}
	\int_{0}^{1} I_1(v;t) I_2(v;t) dv
	\geq I_1\left((2n-3)^{-1/2};t\right) \int_{0}^{1} I_2(v;t) dv.
\end{align}
Now, using \eqref{value of I_2}, we can write
\begin{align*}
	\int_0^1 I_2(v;t) dv
	&= \int_0^1 \dfrac{d}{dx}\left(-x(1-x^2)^{n-2}\right) dx=1.
\end{align*}
Thus, \eqref{inequality involving I_1 and I_2} simplifies to
%\begin{align*}
$
\displaystyle	
\int_{0}^{1} I_1(v;t) I_2(v;t) dv \geq 0
$
.
%\end{align*}

Multiplying both sides by $\dfrac{t^3}{4(n-1)} \left(\dfrac{t}{2}\right)^{2(n-3)}$
gives the non-negativity of $h_n(t)$ for all $t>0$.
Hence, $H_n\left(x;\tfrac{n-2}{n-1}\right)$ is completely monotone on $(0,\infty)$.

Now, we can write, for $x>0$,
\newline
$
\displaystyle
H_n(x;\alpha)
$
\begin{align*}
	= \left(I_R^{(n)}(x)\right)^2
	+ \left(\dfrac{n-2}{n-1} - \alpha \right) I_R^{(n-1)}(x) I_R^{(n+1)}(x)
	- \left(\dfrac{n-2}{n-1}\right) I_R^{(n-1)}(x) I_R^{(n+1)}(x).
\end{align*}
Rearranging the terms gives, for $x>0$,
\begin{align*}
H_n(x;\alpha)
    & = \left(I_R^{(n)}(x)\right)^2 - \left(\dfrac{n-2}{n-1}\right) I_R^{(n-1)}(x) I_R^{(n+1)}(x)\\
	& + \left(\dfrac{n-2}{n-1} - \alpha \right)  (-1)^{n-1} I_R^{(n-1)}(x) (-1)^{n+1} I_R^{(n+1)}(x),
	\quad x>0.
\end{align*}
Using the complete monotonicity of $(-1)^{n} I_R^{(n)}(x)$ and of $H_n\left(x;\tfrac{n-2}{n-1}\right)$, together with the fact that any convex linear combination of completely monotone functions is itself completely monotone, we conclude that $H_n(x;\alpha)$ is also completely monotone.
\end{proof}
Since $H_n\left(x;\tfrac{n-2}{n-1}\right)$ is completely monotonic, we have
$	I_R^{(n)}(x))^2 - I_R^{(n-1)}(x) I_R^{(n+1)}(x) > 0,$
which provides the following specific case.
\begin{corollary}
	For $x>0$ and any natural number $n \geq 2$, we have
	\begin{align*}
		\dfrac{(I_R^{(n)}(x))^2}{I_R^{(n-1)}(x) I_R^{(n+1)}(x)} \geq \dfrac{n-2}{n-1}.
	\end{align*}
\end{corollary}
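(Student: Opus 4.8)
The plan is to read off the Corollary as an immediate quantitative consequence of Theorem \ref{main theorem}\ref{part 2}, specialized to the endpoint value of the parameter. First I would apply Theorem \ref{main theorem}\ref{part 2} with the particular choice $\alpha = \tfrac{n-2}{n-1}$, which is admissible since it satisfies (with equality) the hypothesis $\alpha \le \tfrac{n-2}{n-1}$. This yields that
\begin{align*}
H_n\!\left(x;\tfrac{n-2}{n-1}\right) = \left(I_R^{(n)}(x)\right)^2 - \left(\tfrac{n-2}{n-1}\right) I_R^{(n-1)}(x) I_R^{(n+1)}(x)
\end{align*}
is completely monotone on $(0,\infty)$. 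In particular, taking $k=0$ in Definition \ref{cmf definition}, it is nonnegative on $(0,\infty)$, so that $\left(I_R^{(n)}(x)\right)^2 \geq \tfrac{n-2}{n-1}\, I_R^{(n-1)}(x) I_R^{(n+1)}(x)$ for every $x>0$. (In fact, one may note it is strictly positive, but nonnegativity suffices for the stated inequality.)

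The only point requiring a word of care is that one must divide through by $I_R^{(n-1)}(x) I_R^{(n+1)}(x)$ without reversing the inequality, which requires this quantity to be positive. This follows directly from Theorem \ref{ramanujan int cm thm}: the quantities $(-1)^{n-1} I_R^{(n-1)}(x)$ and $(-1)^{n+1} I_R^{(n+1)}(x)$ are both strictly positive, and since $(-1)^{n-1}(-1)^{n+1} = (-1)^{2n} = 1$, their product equals $I_R^{(n-1)}(x) I_R^{(n+1)}(x) > 0$. Dividing the previous inequality by this positive factor gives exactly
\begin{align*}
\dfrac{\left(I_R^{(n)}(x)\right)^2}{I_R^{(n-1)}(x) I_R^{(n+1)}(x)} \geq \dfrac{n-2}{n-1}, \qquad x>0,
\end{align*}
for every integer $n\ge 2$, which is the assertion.

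There is essentially no genuine obstacle here; the content of the Corollary is already contained in Theorem \ref{main theorem}\ref{part 2}, and the proof is just the extraction of the scalar inequality from the complete monotonicity statement together with the sign bookkeeping for the denominator. For completeness I would remark that the case $n=2$ is degenerate — the right-hand side is $0$ and the inequality reduces to $\left(I_R^{(2)}(x)\right)^2 \ge 0$ — while for $n \ge 3$ the bound is nontrivial, and it may be contrasted with the necessary condition $\alpha \le \tfrac{n-1}{n}$ from Theorem \ref{main theorem}\ref{part 1}, leaving the gap between $\tfrac{n-2}{n-1}$ and $\tfrac{n-1}{n}$ as the residual open range.
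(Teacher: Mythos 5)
Your proof is correct and follows essentially the same route as the paper: the corollary is extracted from the complete monotonicity of $H_n\left(x;\tfrac{n-2}{n-1}\right)$ established in Theorem \ref{main theorem}\ref{part 2}, which gives nonnegativity at $k=0$, followed by division by $I_R^{(n-1)}(x)\,I_R^{(n+1)}(x)$. Your explicit verification that this denominator is positive (via Theorem \ref{ramanujan int cm thm} and the parity of the signs) is a small point the paper leaves implicit, but the argument is the same.
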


\begin{remark}
In Theorem \ref{main theorem}, we have established that $H_n(x;\alpha)$ is completely monotonic if, and only if, $\alpha \leq \dfrac{n-2}{n-1}$. However, the complete monotonicity of $H_n(x;\alpha)$ for $\alpha$ in the interval $\left(\dfrac{n-2}{n-1}, \dfrac{n-1}{n}\right)$ remains to be established. The graph below suggests that $H_n(x;\alpha)$ might be completely monotonic in this range.
\end{remark}

\begin{figure}[H]
%	\footnotesize
%	\stackunder[5pt]
%   {
    \includegraphics[scale=0.7]{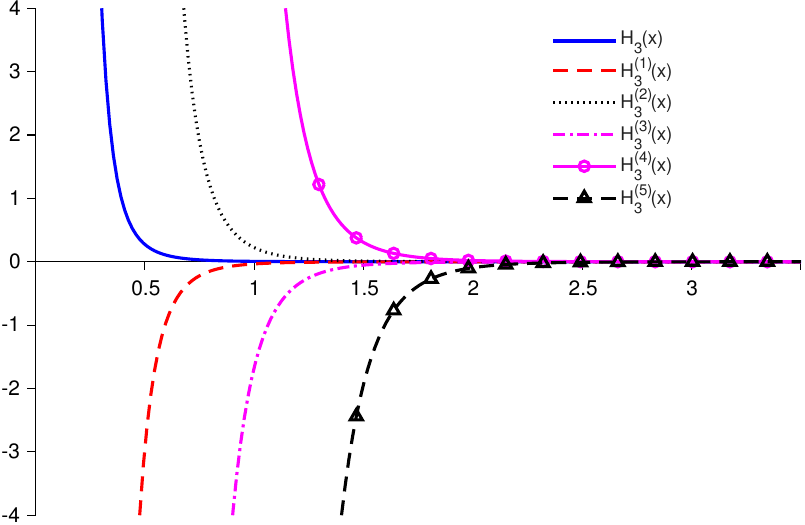}
%   }{}
	\caption{Graph of $H_3(x;0.55)$ upto its fifth derivative}
	\label{fig:graph of H_3(x)}
\end{figure}
From the Figure \ref{fig:graph of H_3(x)}, it can be observed that the function is positive and its derivatives alternate in sign, starting from negative sign. From the Definition \ref{cmf definition} of complete monotonicity, this suggests that $H_n(x;\alpha)$ may be completely monotonic within this interval. However, this observation is not conclusive, since the graph only presents derivatives up to the fifth order, whereas complete monotonicity requires verification for derivatives of all orders. Hence, we conclude with the following problem.

\begin{problem}
To establish the complete monotonicity of $H_n(x;\alpha)$ for $\alpha \in \left(\dfrac{n-2}{n-1}, \dfrac{n-1}{n}\right)$, where $n$ is a positive integer with  $n\geq2$.
\end{problem}

%\noindent
%{\bf{\underline{Corresponding Author}}}.
%A. Swaminathan, Department of Mathematics, IIT Roorkee, Roorkee, 247 667, India, Email: a.swaminathan@ma.iitr.ac.in
%
%\noindent
%{\bf{\underline{Ethical Approval}}}. No such approval is required as this work does not involve any such human and/or animal studies.
%
%\noindent
%{\bf{\underline{Data Availability}}}. Data sharing not applicable to this article as no datasets were generated or analysed during the current study.
%
%\noindent
%{\bf{\underline{Conflict of Interest}}}. There is no conflict of interest related to this manuscript.
%
%\noindent
%{\bf{\underline{Funding}}}. There are no funding support available for this manuscript.
%
%\noindent
%{\bf{\underline{Competing Interest}}}. The authors have no relevant financial or non-financial interests to disclose.
%
%
%\noindent
%{\bf{\underline{Authors contribution}}}. All authors contributed equally. All authors read and approved the final manuscript.
%
%

\end{document}